\documentclass[dvipsnames]{scrartcl}
\usepackage{geometry}
\usepackage{tikz,float,easyReview,csquotes}
\usepackage{subcaption}
\usepackage[shortlabels]{enumitem}
\usetikzlibrary{arrows,calc,intersections,patterns}
\usepackage{mdframed}

\usepackage{meineshortcuts_paper}

\newcommand{\evcf}{N}
\newcommand{\admf}{N}

\newcommand{\mrcF}{\Phi}
\newcommand{\sumcount}{s}
\newcommand{\1}{\indic}

\author{Max Kämper%
  \and  Christoph Schumacher%
  \and Fabian Schwarzenberger%
  \thanks{Fakult\"at f\"ur Informatik/Mathematik, HTW Dresden, 01069 Dresden, Germany}
  \and Ivan Veseli\'c%
  \thanks{Fakult\"at f\"ur Mathematik, TU Dortmund, 44221 Dortmund, Germany}
}
\begin{document}
\title{Quantitative concentration inequalities for the uniform approximation of the IDS}
\maketitle

\begin{abstract}
  The integrated density of states (IDS) is a fundamental spectral quantity for quantum Hamiltonians modeling condensed matter systems, describing how densely energy levels are distributed. 
  It can be interpreted as a volume-averaged spectral distribution. Hence, there are two equivalent definitions of the IDS related by the Pastur–Shubin formula: an operator-theoretic trace formula and a limit of normalized eigenvalue counting functions on finite volumes.
We study a discrete random Schrödinger operator with bounded random potentials of finite-range correlations and prove a quantitative concentration inequality ensuring, with explicit high probability, that the empirical IDS (normalized eigenvalue counting function) uniformly approximates the abstract IDS trace formula within a prescribed error, thereby implying confidence regions for the IDS.
\medskip

\noindent\textbf{MSC:} 47B80, 60B12, 62E20, 82B10

\noindent\textbf{Keywords:} Statistical mechanics, Anderson model, Integrated density of states, Uniform convergence, Empirical measures, Concentration inequality, Entropy  bound
\end{abstract}

%\thanks{...}

%\subjclass{47B80, 60B12, 62E20, 82B10}
%\keywords{Statistical mechanics,Anderson model, Integrated density of states, Uniform convergence, Empirical measures, Large deviations}

\section{Introduction}
The integrated density (IDS) of states is a key spectral characteristic of quantum Hamiltonians $H_\omega$ modelling condensed matter. It is the volume-normalized or volume-averaged spectral distribution function of such Hamiltonians, where some form of translation invariance, at least ergodicity is assumed. Naively speaking it informs us where the spectrum is more dense and where less so. It can be used to calculate all basic thermodynamic quantities of the corresponding non-interacting many-particle system.

There are two a priori independent ways to define the IDS. One is a closed formula of operator algebraic flavour and the other a limit formula. The former one reads
\begin{align} \label{eq:IDS-algebra}
	\RR\ni E \mapsto \Tr[\1_{(-\infty,E]}(H_\omega) \1_{\Lambda}]
\end{align}
where $\Tr$ denotes the trace, $\1_{(-\infty,E]}(H_\omega)$ the spectral projector on the energy interval $(-\infty,E]$ and $\1_{\Lambda}$ the multiplication operator with the indicator function of the unit cube $\Lambda=[-1/2, 1/2)^d$. The second definition is
\begin{equation} \label{eq:IDS-limit}
	\RR\ni E \mapsto \lim_{L\to\infty} N_\omega^{\Lambda_L}(E)
\end{equation}
where $ N_\omega^{\Lambda_L}$ denotes the normalized eigenvalue counting function of $ H_\omega^{\Lambda_L}$, 
some appropriate restriction of the Hamiltonian to the box $\Lambda_L:=[-L/2, L/2)^d$. Clearly, the equality of these two expressions is a variant of the law of large numbers or ergodic theorem. The identity is associated with the names of L.A. Pastur and M.A. Shubin, who established this formula for somewhat different classes of operators in their groundbreaking work, see for instance \cite{Shubin79,Pastur80}, the monographs  \cite{CarmonaLacroix90, PasturFigotin92,Stollmann01,Veselic08,AizenmanWarzel15} and the references therein.

More recently there has been interest in establishing a strong version of the Pastur-Shubin formula where convergence holds uniformly in the energy parameter $E$, see for instance
\cite{Lenz-02,LenzS-05,LMV06,LenzV-09,LSV10,PogorzelskiS12,Schwarzenberger12,AyadiSV13,SSV17,SSV18}.
Since all arguments (known to us) used in this context rely on the interlacing property of finite rank perturbations we will restrict ourselves in the rest of the paper to Hamiltonians defined on $\ell^2$ of some translation invariant graph. (Most ideas have been extended to quantum graphs of the same form, in particular to one-dimensional continuum random Schrödinger operators.)
Obviously, uniform convergence w.r.t. energy is stronger than pointwise (a.e.) convergence. Moreover, uniform convergence of probability distribution functions induces a natural topology on the space of measures.

From the empirical point of view only the quantity \eqref{eq:IDS-limit} is accessible.
Since one aims to identify the underlying measure defined by \eqref{eq:IDS-algebra} from the limiting procedure the following questions are natural:
%The question we will ask to motivate our new results will take a practical or empirical point of view.
\begin{itemize}
  \item \emph{How precisely can one predict or estimate the IDS based on a sample of laboratory measurements or, more realistically, a number of computational physics simulations?
How many measurements or iterations are necessary to be able to do so with a predescibed accuracy and confidence?}
  \item \emph{In the language of statistics we are asking for a confidence region of the IDS. An alternative scenario would be that a certain IDS is hypothesized for a material or Hamiltonian and a test is performed based on a number of samples. Then the  question is, how many samples are necessary, to distingush the hypothesized  measure?}
\end{itemize}

To give an idea how our results can answer such questions we formulate a consequence of the theorems obtained in the paper for a particulary simple model: Let $H_\omega=-\Delta +V_\omega\colon \ell^2(\ZZ^d) \to \ell^2(\ZZ^d) $, where $\Delta$ is the discrete Laplacian and $ (V_\omega \varphi)(k)= \omega_k \varphi(k)$ the multiplication operator by a sequence of i.i.d. bounded random variables $\omega_k$ indexed by $k \in \ZZ^d$. Denote the distribution of $\o_k$ by $\mu$ and $\PP=\bigotimes_{\ZZ^d} \mu$. Let $H_\omega^{\Lambda_L}=p_{\L_L}H_\o i_{\L_L}$ be the compression to the subspace $\ell^2(\Lambda_L)$ and $ N_\omega^{\Lambda_L}$ its normalized eigenvalue counting function.
Our results imply the following confidence region estimate:
\begin{tthm} \label{thm:reducedmainresult} For $d \geq 3$ and all $\alpha, \beta \in (0,1)$
	\begin{align*}
	\sup\limits_{\mu} \PP\{ \Vert N_\omega^{\Lambda_L}-N\Vert_\infty>\beta \} < \alpha
	\end{align*}
	provided
	\begin{align}\label{eq:minsizeforconcentrationineq}
		L>\max \left\{ \left(\frac{C}{\beta}+1\right)^2, \left(\left(\log \left(2 / \alpha \right)K\right)^{2/(d-2)}+1 \right)^2, 16  \right\}
	\end{align}
	where
	\begin{align*}
		C=40d+104 \cdot 2^d-51  \text{ and } 
		K =\left(\frac{480}{\log (3/2)}+\frac{16}{\log (2)}\right)<1207.
	\end{align*}
	Here the supremum is taken over all Borel probability measures $\mu$ on $\RR$ with bounded support.
\end{tthm}
Hence the $\alpha-$confidence region corresponding to the measurement $\o \in \RR^{\ZZ^d}$ is
\begin{align*}
	\left\{ \rho \in \BB \mid \rho \text{ isotone }, \norm[\infty]{N_\o^{\L_L}-\rho}\leq \beta \right\},
\end{align*}
provided \eqref{eq:minsizeforconcentrationineq} is satisfied. Here $\BB$ is the space of bounded right-continuous functions.

\subsubsection*{The rest of the paper ist organised as follows:}
We present the framework of our general model in Section  \ref{s:NAbP} and our main results in Section  \ref{s:MR},
concluding with a discussion of the results and an outline of the proof.
The latter is split into two parts:
The geometric one is presented in Section  \ref{sec:geometric} and the probabilistic in Section  \ref{sec:Probability} .
The proofs of the main theorems are completed in Section  \ref{sec:proofsofthemaintheorems}, whereas
Section \ref{s:EoS} discuses some extensions, in particular to random Schr\"odinger operators on finitely generated amenable groups 
and Laplace operators on long-range percolation graphs.

More general results and detailed proofs are available in the dissertation \cite{Diss}, which is freely accessible online. Accordingly, we present here some arguments only under simplfying conditions rather than in full detail, and rather focus on explaining how the statements for the more specific situation considered in this paper follow from the general framework developed in the thesis.

\section{Notation,  model assumptions, and some basic properties}
\label{s:NAbP}

Here we fix some notation, state assumptions \ref{prop:M1_translationinvariance} and \ref{prop:M3_independence} on the probability space under consideration, 
and thereafter establish fundamental properties of the eigenvalue counting functions, denoted here \ref{prop:admissibletranslationinvariance} to \ref{prop:measurable}.

Let $\norm[1]{x}=\sum_{i=1}^d \abs{x_i}$ denote the $\ell^1$-norm in $\ZZ^d$ with the associated metric
\begin{align*}
	\d_{\ZZ^d} \colon \ZZ^d \times \ZZ^d \to \NN_0, \ \d_{\ZZ^d} (x,y)=\norm[1]{x-y}.
\end{align*}
For $\L_1, \L_2 \subseteq \ZZ^d$ we define the distance between $\L_1$ and $\L_2$ as
\begin{align*}
	\d_{\text{set}}(\L_1, \L_2):=\min \left\{ \d_{\ZZ^d} (x,y) \mid x \in \L_1, y \in \L_2   \right\}.
\end{align*}
For any $\L \subset \ZZ^d$ we denote the number of points in $\L$ by $\abs{\L}$.\\
Let $\O=(\RR^{\ZZ^d},\cB (\RR^{\ZZ^d}), \PP)$, where $\cB (\RR^{\ZZ^d})$ is the Borel $\sigma$-algebra of $\RR^{\ZZ^d}$ and  $\PP$ a probability measure such that the following hold:
\begin{enumerate}[label={(M\arabic*)}]
	\item \textbf{Translation invariance}: The translations
	\begin{align}\label{eq:deftranslationomega}
	\gamma_z : \O \to \O, (\gamma_z \o)_y := \o_{y+z}.
	\end{align}	
	are measurable and $\PP \circ \gamma_z = \PP$ for each $z \in \ZZ^d$. \label{prop:M1_translationinvariance}
	%\item \textbf{Measurability of Projections}: For all $\L \in \cC$ the projection $\Pi_\L$ is measurable. \alert{kann man weglassen?} \label{prop:M2_existencedensities}
	\item \textbf{Independence at a distance}: There is an $r \geq 0$ such that for all $k \in \NN$ if
	\begin{itemize}
		\item $\L(1), ..., \L(k)$ are finite subsets of $\ZZ^d$,
		\item $\abs{\L(i)}\neq 0$ for all $1\leq i \leq k$ and
		\item $\min \{ \d_{\text{set}}(\L(i),\L(j)) \mid i \neq j \}>r$
	\end{itemize}
	then the projections $\left(\Pi_{\L(i)}\right)_{1\leq i\leq k}$
	\begin{align*}
		\Pi_\L : \O \to \O_\L , \ \Pi_\L (\o):=\o_\L := (\o_z)_{z \in \L}.
	\end{align*}	
	are independent, where $\O_\L=(\RR^{\L},\cB (\RR^{\L}), \PP_\L)$ with the image measure $\PP_\L= \PP\circ\Pi_\L^{-1}$. %\alert{so ist die Definition genau genommen zirkulär}
	\label{prop:M3_independence}
\end{enumerate}
%Let $\left(v_\cdot (z)\right)_{z \in \ZZ^d}$ be a sequence of identically distributed real random variables on $\O$.\\
The operator $H_\o = -\Delta + V_\o :  \ell^2 (\ZZ^d) \to \ell^2 (\ZZ^d)$ with
\begin{align}\label{eq:defAndersonop}
	H_\o \Psi (x) := \sum\limits_{y \in \ZZ^d : \ \abs{y-x}=1} \left(\Psi (x)- \Psi (y) \right) + \o_x \Psi (x)
\end{align}
for $\o \in \O$ is called the \textbf{Anderson operator}.\\
The Anderson operator can be compressed to a finite subset $\L$ of $\ZZ^d$, resulting in the \textbf{restricted Anderson operator}
\begin{align*}
	H_\o^\L : \ell^2 (\L) \to \ell^2 (\L), \ H_\o^\L = p_\L H_\o i_\L.
\end{align*}
Here, $i_\L: \ell^2 (\L) \to \ell^2 (\ZZ^d)$ and $p_\L : \ell^2 (\ZZ^d) \to \ell^2 (\L)$ are the \textbf{embedding} of $\ell^2 (\L)$ into $\ell^2 (\ZZ^d)$ and the \textbf{projection} from $\ell^2 (\ZZ^d)$ to $\ell^2 (\L)$ with
\begin{align*}
	i_\L \phi (z)=\begin{cases}
						\phi (z) & z \in \L \\
						0        & z \notin \L
				  \end{cases}, \quad p_\L \phii (z)= \phii (z) \ \forall z \in \L.
\end{align*}		
The restricted Anderson operator is a hermitian matrix operator, since $H_\o$ is self-adjoint and $\ell^2 (\L)$ is finite dimensional.
Thus, the operator has a finite number of real eigenvalues. The functions
\begin{align}
	\evcf (\L, \o) \colon \RR \to  [0,1], \ &\evcf (\L, \o) (x) :=  \#\left\{\text{eigenvalues of $H_\omega^\Lambda$ $\leq x$}\right\} \label{eq:defevcf} \\
	\bar{N}(\L, \o) \colon \RR \to  [0,1], \ &\bar{N}(\L, \o) (x) :=  \frac{\evcf (\L, \o)}{\abs{\Lambda}}. \label{eq:defnevcf}
\end{align}
are called the \textbf{eigenvalue counting function (evcf)} and the normalized  eigenvalue counting function 
for a restricted Anderson operator $H_\omega^\Lambda$, respectively.
All eigenvalues are always counted with their multiplicity.
We want to investigate the convergence of this function for larger and larger sets $\L$.

In order to discuss certain useful properties of the evcf, we first introduce some notation. Let $\cC$ be the set of all finite subsets of $\ZZ^d$.
We call
\begin{align*}
	\partial^r (\L) := \{x \in \L : \d_{\text{set}} (x, \ZZ^d \setminus \L) \leq r\} \cup \{x \in \ZZ^d \setminus \L : \d_{\text{set}} (x, \L) \leq r\}
\end{align*}
the \textbf{$r$-boundary} of a set $\L \subset \ZZ^d$ and define
\begin{align*}
	\L^r := \L \setminus \partial^r (\L)=\{x \in \L \mid \d_{\text{set}} (x, \ZZ^d \setminus \L) > r\}.
\end{align*}
For sets $\L_i$ with an index we use the shorthand $\L_i^r=\left(\L_i\right)^r$.
\\
We call any integer translate of $\{x \in \ZZ^d : \ 0\leq x_i <n \ \forall 1 \leq i \leq d\}$ a \textbf{cube of side length $n$}, which always contains $n^d$ elements.
In the following we will heavily use the fact that every sequence $\L_k, k \in \NN,$ of cubes of strictly increasing side length is a so called \textbf{F{\o}lner sequence}, meaning
\begin{align}\label{eq:Folnercubesbound}
	\frac{\abs{\partial^r \left(\L_k\right)}}{\abs{\L_k}}\xrightarrow[i \to \infty]{}0
\end{align}
for all $r \in \NN$.\\
Now we can formulate a number of properties of the eigenvalue counting function that we will use later.
\begin{enumerate}[label={(A\arabic*)}]
		\item \label{prop:admissibletranslationinvariance}\textbf{translation invariance}: For $\L \in \cC, \ z \in \ZZ^d$ and $\o \in \O$ we have
		\begin{align*}
			\evcf (\L+z, \o)=\evcf (\L, \gamma_z \o).
		\end{align*}
		\item \label{prop:admissiblelocality}\textbf{locality}: For all $\L \in \cC$ and $\o, \o' \in \O$ with $\Pi_\L (\o) = \Pi_\L (\o')$ we have
		\begin{align*}
			\evcf (\L, \o)=\evcf (\L, \o').
		\end{align*}
		\item \label{prop:admissiblealmostadditivity}\textbf{almost additivity}: For all $\o \in \O$, all pairwise disjoint $\L(1), ..., \L(n) \in \cC$ and $\L :=\bigcup_{i=1}^n \L(i)$ we have
		\begin{align*}
			\norm[\infty]{\evcf  (\L, \o) - \sum\limits_{i=1}^n \evcf  (\L(i), \o)} \leq \sum\limits_{i=1}^n b(\L(i)) \quad \text{ with }  b(\L):=8 \abs{\L \setminus \L^1} 
		\end{align*}
		$b$ satisfies
		\begin{itemize}
			\item for all $\L \in \cC$ and $z \in \ZZ^d$ we have $b(\L)=b(\L +z)$
			\item $b(\L)\leq 8 \abs{\L}$ for all $\L \in \cC$
			\item if $(\L(n))_{n \in \NN}$ is a sequence of cubes with strictly increasing side length, then
				\begin{align*}
					\lim\limits_{n \to \infty} \frac{b(\L(n))}{\abs{\L(n)}}=0
				\end{align*}
		\end{itemize}
		\item \label{prop:admissibleboundedness}\textbf{boundedness}:
		\begin{align*}
			\sup\limits_{\o \in \O} \norm[\infty]{\evcf (\{0\}, \o)}=1
		\end{align*}
		\item \label{prop:admissiblemonotone}\textbf{monotonicity}: The function $\evcf (\L, \o)$ is monotone increasing, i.e.
		\begin{align*}
			\forall \L \in \cC, \o \in \O: \ x<y \Longrightarrow \evcf (\L, \o) (x) \leq \evcf (\L, \o) (y).
		\end{align*}
		\item \label{prop:measurable}\textbf{point-wise measurability}: The function $\evcf (\L, \cdot) (x): \left(\RR^{\ZZ^d},\cB \left(\RR^{\ZZ^d}\right)\right) \to (\RR, \cB(\RR))$ is measurable for all $x \in \RR$ and $\L \in \cC$, where $\cB (\RR)$ is the Borel $\sigma$-algebra of $\RR$. %\alert{ist das notwendig?}
\end{enumerate}
\begin{proof}
	For the proof of \ref{prop:admissibletranslationinvariance} to \ref{prop:admissiblemonotone} see \cite[Lemma 7.1]{SSV17}.\\
	To show \ref{prop:measurable}, let $\l_j^\downarrow (H_\o^\L)$ be the j-th largest eigenvalue of $H_\o^\L$, counted with multiplicities. We use Corollary III.2.6 of \cite{Bhatia96}, which shows that for all $\L \in \cC$ and $\o \neq \o' \in \O$ we have
	\begin{align*}
		\max\limits_{j=1,...,\abs{\L}}\abs{\l_j^\downarrow (H_\o^\L) - \l_j^\downarrow (H_{\o'}^\L)}\leq \norm[]{H_\o^\L - H_{\o'}^\L}
	\end{align*}
	where the last norm is the operator norm. From the definition of the restricted Anderson operator follows
	\begin{align*}
		\norm[]{H_\o^\L - H_{\o'}^\L}=\norm[]{p_\L (V_\o - V_{\o'}) i_\L}=\norm[]{p_\L (V_{\o-\o'}) i_\L}=\norm[\infty]{\o_\L-{\o'}_\L}.
	\end{align*}
	%\alert{Passt das so?}
	Thus, $\l_j^\downarrow (H_\cdot^\L): \O^\L \to \RR$ is continuous and therefore measurable for all $j$.
	This in turn means that %$\{\o \in \O : \l_j^\downarrow (H_\o^\L)>x\}$ and also \\
	$\{\o \in \O : \l_j^\downarrow (H_\o^\L)\leq x\}$ is a measurable set for all $x \in \RR$ and all $1\leq j \leq \abs{\L}$, and consequently the functions $\indic \left\{\l_j^\downarrow (H_\o^\L)\leq x\right\}$ are also measurable. Finally
	\begin{align*}
		\evcf (\L, \o)(x)=\sum\limits_{j=1}^{\abs{\L}} \indic \left\{\l_j^\downarrow (H_\o^\L)\leq x\right\},
	\end{align*}
	which shows that $\o \mapsto\evcf (\L, \o)(x)$ is measurable for all $x \in \RR$ and $\L \in \cC$.
\end{proof}

\section{Main results: Concentration inequalites for the evcf}
\label{s:MR}

With the notation in place we are now able to state our main results. First is a concentration inequality for normalized eigenvalue counting functions. This result is an adaption of \cite[Corollary 7.3]{Diss}.

\begin{tthm}\label{thm:sqrtexpconcentrationinequality}
Let $d\geq 3$, $M \geq 2$
and $\O=(\RR^{\ZZ^d},\cB (\RR^{\ZZ^d}), \PP)$ be a probability space satisfying \ref{prop:M1_translationinvariance} and \ref{prop:M3_independence} with $r$ the smallest constant such that \ref{prop:M3_independence} is satisfied.
Let $N$ be the IDS and $\bar{N}(\L_n, \o)$ be the normalized eigenvalue counting function of an Anderson operator as in \eqref{eq:defAndersonop}
for a cube $\L_n \subset \ZZ^d$  of side length  $n \in \NN$.
Then there is a set $A_{M,n} \in \cB(\RR^{\ZZ^d})$ such that
	\begin{align}\label{eq:evcfbound}
		\norm[\infty]{\bar{N}(\L_n, \o) -N} &\leq 32 d \frac{1}{n} +104 \left(2^d-1\right) \frac{1}{\sqrt{n}} + \left( 8d + 4r (2^d-1) + 72dr +1 \right) \frac{1}{\sqrt{n}-1}
	\end{align}
	for all $\o \in A_{M,n}$ and
	\begin{align}\label{eq:evcfconcentration}
		\PP \left( A_{M,n} \right) \geq
		1-M \exp \left(- \frac{\sqrt{\lfloor n/\lfloor\sqrt{n}\rfloor \rfloor^d}}{\lfloor\sqrt{n}\rfloor K_M}\right)
	\end{align}
	provided $n>(2r+1)^2$ and $n>16$, where
	\begin{align*}
		K_M&=\left(\frac{40(M+1)}{\log (3/2) (M-1)}+\frac{4}{\log (M)}\right)\sum\limits_{q=0}^\infty 2^{-q} \sqrt{2+ 2 q \log (2)} \\
		&< 16 \left(\frac{10(M+1)}{\log (3/2) (M-1)}+\frac{1}{\log (M)}\right).
	\end{align*}
\end{tthm}

\begin{remark}
	Theorem \ref{thm:reducedmainresult} is a direct consequence of Theorem \ref{thm:sqrtexpconcentrationinequality}. First we choose $M=2$ and $r=0$, then note that \eqref{eq:evcfbound} implies
	\begin{align*}
		\norm[\infty]{\bar{N}(\L_n, \o) -N} < \frac{C}{\sqrt{n}-1}
	\end{align*}
	and \eqref{eq:evcfconcentration} implies
	\begin{align*}
		\PP \left( A_{2,n} \right) \geq 1 - 2 \exp \left(- \frac{\left(\sqrt{n}-1\right)^{d/2-1}}{K}\right)
	\end{align*}
	with $C$ and $K$ as in the statement of Theorem \ref{thm:reducedmainresult}. Then we just have to choose $n$ large enough to ensure
	\begin{align*}
		\frac{C}{\sqrt{n}-1} \leq \beta , \ \PP \left( A_{2,n} \right) \geq 1 - \alpha
	\end{align*}
	as well as the conditions $n>(2r+1)^2$ and $n>16$ necessary for Theorem \ref{thm:sqrtexpconcentrationinequality}, leading to the bound given in Theorem \ref{thm:reducedmainresult}.
\end{remark}

\begin{remark}[What is different for dimensions one and two?]\label{cor:sqrtexpconcentrationinequality1+2dim}
	For dimensions below three the concentration inequality needs to have a slightly different scaling in $n$. 
The corresponding result to Theorem \ref{thm:sqrtexpconcentrationinequality} for $d=1$ and $d=2$ provides an error bound differing from \eqref{eq:evcfbound} only in the last term, 
namely 
	\begin{align}
	\label{eq:evcfbound:k}
		\norm[\infty]{\bar{N}(\L_n, \o) -\admf} &\leq 32 d \frac{1}{n} +104 \left(2^d-1\right) \frac{1}{n^{1-1/k}} +   \frac{8d + 4r (2^d-1) + 72dr +1 }{\sqrt[k]{n}-1}
		%+ \left( 8d + 4r (2^d-1) + 72dr +1 \right) \frac{1}{\sqrt[k]{n}-1}
	\end{align}
	for all $\o \in A_{M,n}'$ where now the event $A_{M,n}'$ satisfies
	\begin{align*}
		\PP \left( A_{M,n}' \right) \geq
		1-M \exp \left(- \frac{\sqrt{\lfloor n/\lfloor\sqrt[k]{n}\rfloor \rfloor^d}}{\lfloor\sqrt[k]{n}\rfloor K_M}\right)
	\end{align*}
    where
    \begin{align*}
		k=\begin{cases}
			4 & \text{ for }d=1\\
			3 & \text{ for }d=2\\
		\end{cases}
	\end{align*}
	but the other constants are unchanged. For a proof see \cite[Corollary 7.3]{Diss}.
\end{remark}

With some changes of the probabilistic parts of the proofs of the previous results it is also possible to show a concentration inequality where the exponent is improved to $\frac{\lfloor n/\lfloor\sqrt[k]{n}\rfloor \rfloor^d}{\lfloor\sqrt[k]{n}\rfloor}$ (instead of $ \frac{\sqrt{\lfloor n/\lfloor\sqrt[k]{n}\rfloor \rfloor^d}}{\lfloor\sqrt[k]{n}\rfloor}$) under a stronger assumption on the size of $n$. 
%We only state the result here, the full proof can be found in \cite[Corollary 7.5]{Diss}.

\begin{tthm}\label{thm:subexpconcentrationinequality}
	Let $ d\in \NN$ as well as $k =2$ for $d\geq 5$ and $k >\frac{4+d}{d}$ for $d<5$. 
%	\begin{align*}
%		k\begin{cases}
%			>\frac{4+d}{d} & \text{ for } d<5\\
%			=2             & \text{ for } d\geq 5
%		\end{cases},
%	\end{align*}
	In the setting of Theorem \ref{thm:sqrtexpconcentrationinequality} there is a set $B_n \in \cB(\RR^{\ZZ^d})$ such that \eqref{eq:evcfbound:k} is true for all $\o \in B_n$ and
	\begin{align*}
		\PP\left(B_n \right)
		\geq 1-  \exp \left( -\frac{1}{24}\frac{\lfloor n/\lfloor\sqrt[k]{n}\rfloor \rfloor^d}{\lfloor\sqrt[k]{n}\rfloor} \right)
	\end{align*}
	provided %$n>(2r+1)^k$ and $n>4$, and additionally
	%	\begin{align*}
		%		m(n)=\lfloor n^k \rfloor
		%	\end{align*}
	%	with $k<d/(4+d)$ for $d<5$ and $k=1/2$ for $ d \geq 5$, provided
	\begin{align*}
		n > \max \left\{16,(2r+1)^k, \left(\left(12 K_2\right)^{2/d}+1\right)^{\frac{dk}{dk-d-4}}\right\}
	\end{align*}
	where $K_2=\left(\frac{120}{\log (3/2)}+\frac{4}{\log (2)}\right) \sum\limits_{q=0}^\infty 2^{-q} \sqrt{2+ 2 q \log (2)}<\left(\frac{480}{\log (3/2)}+\frac{16}{\log (2)}\right)$.
\end{tthm}

%\begin{cor}[Special case r=0, M=2]
	
%\end{cor}

\subsection{Discussion of achievements and limitations of our new findings}
Our results establish that the abstract spectral distribution function $\RR\ni E \mapsto \Tr[\1_{(-\infty,E]}(H_\omega) \1_{\Lambda}]$ allows for explicit confidence regions based on sampling data. 
This is to our knowledge the first rigorous result for estimating the IDS in the \textbf{space of measures} in the language of statistics.
\\

 Our strong error bound result from to the use of concentration inequalities based on entropy and bracketing numbers, 
 together with the observation that due of the structure of the eigenvalue counting functions, bracketing covers inherit favorable properties, 
 leading in turn to efficient complexity bounds. The construction of bracket coverings for eigenvalue counting functions are to the best of our knowledge 
an entirely new contribution of this paper, respectively the Dissertation  \cite{Diss}.
\\

Our result certainly does not provide sharp bounds on the approximation error, or the number of necessary samples, respectively. The bounds are too large to be relevant for computational physicists.
This is on one hand due to the fact that we gave away much when estimating the constants in the proof. Even more importantly, the geometric error is at least of the order of magnitude $n^{-1/2}$, requiring a huge size of the individual samples.
This geometric error of order $n^{-1/2}$ is the bottleneck for any improvement of our estimates. Possibly an improvement could be achieved by some kind of plane-wave or quasi-momentum disintegration of the states in the $\ell^2$ space and averaging over quasi-periodic boundary conditions in the vein of Klopp's approach in  \cite{Klopp-99}.

Furthermore, the Hamiltonian we consider is already an approximation, since it is a one-electron Schrödinger operator, and is considered in the discretized version.
In computational physics one would turn at an earlier stage to an dimension-reduced effective model and determine confidence regions for such effective models. Supremum norms may not be the tool of choice there.

\subsection{Strategy of the proofs and outline for the remainder of the paper}
Our general strategy follows \cite{SSV18}, which is in turn inspired by earlier works, e.g.~\cite{LMV06} and \cite{LSV10}.
The techniques can be divided into two groups. 
The first is the subject of Section \ref{sec:geometric} and is based on almost-additivity and the geometry of $\ZZ^d$. 
The second is devoted to a quantification of the law of large numbers based on concentration inequalities and is covered in Section \ref{sec:Probability}.
Here we use a different approach than in the earlier mentioned works \cite{LMV06}, \cite{LSV10}, and \cite{SSV18}. \\
The proofs of the final concentration inequalities in turn consist of three steps:\\
We first approximate $\frac{\admf (\L_n, \o)}{\abs{\L_n}}$ by a normalized sum over a large number of independent, identically distributed random variables in Lemma \ref{lem:geometricboundevcf} of Section \ref{sec:geometric}.
Then we introduce the Orlicz norm in Section \ref{sec:Probability} and use it to outline a probabilistic argument (Theorem \ref{theorem:expofcountablesup}) to show that this averaged sum converges almost surely uniformly to an expected value for $n \to \infty$. This is accomplished via bracket coverings, which are the topic of Subsection \ref{subsec:bracketcoverings}. The argument yields explicit error bounds in the form of two concentration inequalities, Corollaries \ref{cor:firstconcineq} and \ref{cor:secondconcineq}.\\
After that we use once more geometric arguments (namely Theorem \ref{thm:normalizedadmissiblefctconvergence} of Section \ref{sec:geometric}) 
to show that the sequence of expected values (associated to the approximation of  $\frac{\admf (\L_n, \o)}{\abs{\L_n}}$) converges as well. \\
In Section \ref{sec:proofsofthemaintheorems} all the provious steps are assembled to yield the main results.\\

Symbolically, the strategy can be summarised as follows:
\begin{align}\label{eq:strategy}
	\frac{\admf (\L_n, \o)}{\abs{\L_n}} \approx \frac{1}{k(n)} \sum\limits_{i=1}^{k(n)} \frac{\admf (\L_{m,i}, \o)}{\abs{\L_{m,i}}}\xrightarrow[a.s.]{n\to \infty} \EE \left( \frac{\admf (\L_m, \o)}{\abs{\L_m}}\right) \xrightarrow[]{m \to \infty} \admf.
\end{align}
Here, $\L_n$ is a cube with side length $n$, $k(n)<n$ is a number that grows monotonously with $n$, $\left(\L_{m,i}\right)_{1\leq i \leq k(n)}$ are a series of cubes with side length $m<2n$ that do not intersect.

\section{Geometric approximation estimates}\label{sec:geometric}
We start with the geometric approximation argument. Let $\L_n := ([0,n) \cap \ZZ)^d$ be a cube with side length $n\in \NN$. We \textquote{tile} this cube by smaller cubes with side length $m$, such that $2m<n$ and define the \textbf{tiling set}
\begin{align*}
	T_{m,n}:=\{t \in T_m : \L_m +t \subset \L_n\}
\end{align*}
where $T_m:=m\ZZ^d = \{(z^{(i)})_{1\leq i \leq d} \in \ZZ^d : z^{(i)} \ \mathrm{ mod } \ m =0 \ \forall 1\leq i \leq d\}$. The cardinality of this set is $\abs{T_{m,n}}=\lfloor n/m \rfloor^d$ and all the sets $\L_m + t$ with $t \in T_{m}$ are pairwise disjoint. The union
\begin{align*}
	\L_{m,n}:=\bigcup_{t \in T_{m,n}} \left( \L_m + t\right)= \L_m + T_{m,n}.
\end{align*}
satisfies $\L_{m,n}=\L_{ \lfloor n/m \rfloor m }$. We further define
\begin{align*}
	\hat{\L}_{m,n}:=\L_n \setminus \L_{m,n}.
\end{align*}
See Figure \ref{fig:cubes} for an illustration.
	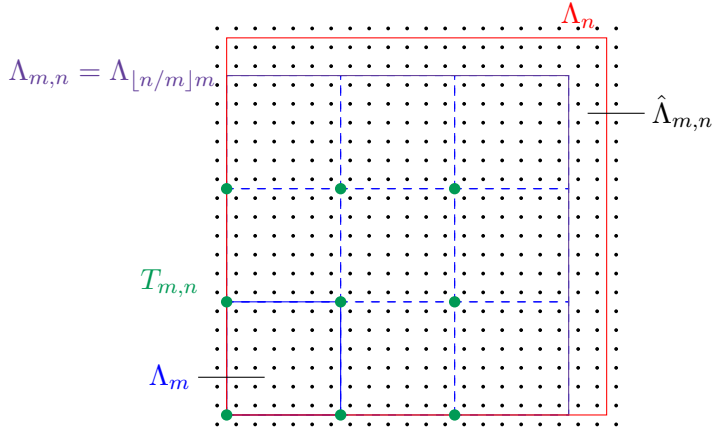
\begin{figure}[H]
	\centering
	\begin{tikzpicture}
		\begin{scope}[scale=0.25]
			\def\X{22};
			\def\Y{22};
			\foreach \x in {1,...,\X}{
				\foreach \y in {1,...,\Y}{
					\fill (\x,\y) circle (0.1);
				}
			};
			\begin{scope}[shift={(1.5,1.5)}]
				\foreach \c in {0,...,2}{
					\foreach \d in {0,...,2}{
						\draw[blue, dashed] (6*\c,6*\d) rectangle (6*\c+6,6*\d+6);
					}
				};
			\draw[blue] (0,0) rectangle (6,6);
			\end{scope}
			\filldraw[draw=white,fill=white] (0,2) rectangle node[blue]{$\Lambda_m$} (-3,5);
	%\node[blue] at (12.5,12.5) {$\Lambda_m$};

			\draw[RoyalPurple] (19.5,1.5) rectangle (1.5,19.5) node [left] {$\L_{m,n}=\L_{ \lfloor n/m \rfloor m   }$};			
			\draw[red] (1.5,1.5) rectangle (21.5,21.5) node [above left] {$\Lambda_n$};
			\foreach \a in {0,6,12}{
				\foreach \b in {0,6,12}{
					\fill[color=ForestGreen]  (\a+1.5,\b+1.5) circle (0.3);
				}
			};
			\filldraw[draw=white,fill=white] (0,7) rectangle node[ForestGreen]{$T_{m,n}$} (-3,10);
			\draw (0,3.5)--(3.5,3.5);
			\draw (20.5,17.5)--(23.5,17.5);
			\filldraw[draw=white,fill=white] (25,17) rectangle node[black]{$\hat{\L}_{m,n}$} (26,18);			
		\end{scope}	
	\end{tikzpicture}
	\caption{Illustration of some lattice subsets for $d=2$.}
	\label{fig:cubes}
	\end{figure}
The properties of the eigenvalue counting function ensure that for all $\o \in \O$, and all $n,m,r \in \NN$ with $n>2m$, $m > 2r+1$
%\begin{align}\label{eq:geometricbounds}
%		\norm[\infty]{\frac{\evcf  (\L_n, \o)}{\abs{\L_n}} -\frac{1}{\abs{T_{m,n}}} \sum\limits_{t \in T_{m,n}} \frac{\evcf  (\L^r_m + t , \o) }{\abs{\L_m}} }
%		&\leq \frac{b\left( \L_{ \lfloor n/m \rfloor m } \right)}{\abs{\L_{ \lfloor n/m \rfloor m }}}+\frac{10 \abs{\partial^m \left(\L^m_n \right)} }{\abs{\L_n^m}}\nonumber \\
%		&\quad  +\frac{b(\L_m) + b(\L_m^r)+ 9\abs{\partial^r \left(\L_m \right)}}{\abs{\L_m}}.
%		%&\quad \quad \leq \frac{b\left( \L_{ \lfloor n/m \rfloor m } \right)}{\abs{\L_{ \lfloor n/m \rfloor m }}}+\frac{(3 D + 2 E) \abs{\partial^m (\L_n^m)} }{\abs{\L_n^m}}+\frac{b(\L_m) + b(\L_m^r)+ (2D+E)\abs{\partial^r (\L_m)}}{\abs{\L_m}}.
%\end{align}
\begin{align}\label{eq:geometricbounds}
	\norm[\infty]{\frac{\evcf  (\L_n, \o)}{\abs{\L_n}} -\frac{1}{\abs{T_{m,n}}} \sum\limits_{t \in T_{m,n}} \frac{\evcf  (\L^r_m + t , \o) }{\abs{\L_m}} }
	&\leq \frac{b\left( \L_{ \lfloor n/m \rfloor m } \right)}{\abs{\L_{ \lfloor n/m \rfloor m }}}+\frac{26 \left(\abs{\L_n}-\abs{\L_n^m}\right) }{\abs{\L_n^m}}\nonumber \\
	&\quad  +\frac{b(\L_m) + b(\L_m^r)+ 17\left(\abs{\L_m}-\abs{\L_m^r}\right)}{\abs{\L_m^r}}. %\frac{b(\L_m) + b(\L_m^r)+ 9\abs{\partial^r \left(\L_m \right)}}{\abs{\L_m}}.
	%&\quad \quad \leq \frac{b\left( \L_{ \lfloor n/m \rfloor m } \right)}{\abs{\L_{ \lfloor n/m \rfloor m }}}+\frac{(3 D + 2 E) \abs{\partial^m (\L_n^m)} }{\abs{\L_n^m}}+\frac{b(\L_m) + b(\L_m^r)+ (2D+E)\abs{\partial^r (\L_m)}}{\abs{\L_m}}.
\end{align}
For an explicit calculation see \cite[Lemma 5.1]{Diss}. % \cite[Lemma 4.1]{SSV17}.
Next we want to make this expression more explicit in terms of $n$,$m$,$r$ and $d$.
\begin{llem}\label{lem:geometricboundevcf}
	For eigenvalue counting functions  on $d$-dimensional cubes with $n,m \in \NN$, $n>4m$ and $m > 2r+1$ we have
	\begin{align}\label{eq:geometricexplicitbound}
		\norm[\infty]{\frac{\evcf  (\L_n, \o)}{\abs{\L_n}} -\frac{1}{\abs{T_{m,n}}} \sum\limits_{t \in T_{m,n}} \frac{\evcf  (\L^r_m + t , \o) }{\abs{\L_m}} }  &\leq 32 d \frac{1}{n} + 104 \left(2^d-1\right) \frac{m}{n} \\
		&\quad + \left( 4d + 2r (2^d-1) + 36dr \right) \frac{1}{m} \nonumber
	\end{align}
\end{llem}
\begin{proof}
The boundary function $b$ is $b(\L)=8\abs{\L \setminus \L^1}$ by \ref{prop:admissiblealmostadditivity} and additionally we have $\abs{\L_n}=n^d$ and $\abs{\L_n^r}=(n-2r)^d$ for cubes $\L_n$. Thus
\begin{align*}
	\frac{b\left( \L_{ \lfloor n/m \rfloor m } \right) }{\abs{\L_{ \lfloor n/m \rfloor m }}}&=8\frac{\abs{\L_{ \lfloor n/m \rfloor m }} - \abs{\L_{ \lfloor n/m \rfloor m }^1}}{\abs{\L_{ \lfloor n/m \rfloor m }}}=8\left(1- \left( \frac{\lfloor n/m \rfloor m -2}{\lfloor n/m \rfloor m } \right)^d\right).
	%\abs{\L_n}-\abs{\L_n^m} &= n^d - (n-2m)^d \\
	%\abs{\L_m}-\abs{\L_m^r} &= m^d - (m-2r)^d.
\end{align*}
and consequently
\begin{align}\label{eq:evcfconcentrationroughbound}
	&\norm[\infty]{\frac{\evcf  (\L_n, \o)}{\abs{\L_n}} -\frac{1}{\abs{T_{m,n}}} \sum\limits_{t \in T_{m,n}} \frac{\evcf  (\L^r_m + t , \o) }{\abs{\L_m}} } \\
	&\quad\quad\quad\leq 8\left(1- \left( \frac{\lfloor n/m \rfloor m -2}{\lfloor n/m \rfloor m } \right)^d\right) + 26 \left( \left(\frac{n}{n-2m}\right)^d - 1 \right) \nonumber \\
	&\quad\quad\quad \quad+ \frac{m^d - (m-2)^d + (m-2r)^d - (m-2r-2)^d+  17 (m^d - (m-2r)^d)}{m^d} \nonumber\\
	&\quad\quad\quad=8\left(1- \left(1- \frac{2}{\lfloor n/m \rfloor m } \right)^d\right) + 26 \left( \left(1+\frac{2 m/n}{1-2m/n}\right)^d - 1 \right) \nonumber \\
	&\quad\quad\quad\quad+ \left(1-\left(1-\frac{2}{m}\right)^d \right)+\left(\left(1-\frac{2r}{m}\right)^d - \left(1-\frac{2r+2}{m}\right)^d\right)\nonumber 
%\\
%	&\quad\quad\quad\quad
+  17 \left(1 - \left(1- \frac{2r}{m}\right)^d\right)
\end{align}
This expression only holds for $m > 2r+1$, since we exclude the edge case where $\left(\L_m^{r}\right)^1 = \emptyset$ but $m-2r-2<0$ in favour of a unified expression.\\
Now we use Bernoulli's inequality
\begin{align}\label{eq:bernoulliineq}
	1-(1-y)^d\leq y d
\end{align}
for $y\leq 1$ for the first, third and fifth term on the right of \eqref{eq:evcfconcentrationroughbound}. Since 
$\lfloor n/m \rfloor m \geq n-m$ we have
\begin{align*}
	\frac{1}{\lfloor n/m \rfloor m }\leq \frac{1}{n-m}\leq \frac{1}{n}\frac{1}{1-m/n}\leq \frac{2}{n}.
\end{align*}
if $n>2m$. \\
If $n\geq 4$ we can apply Bernoulli's inequality and get  the bounds
\begin{align*}
	\left(1- \left(1- \frac{2}{\lfloor n/m \rfloor m } \right)^d\right)\leq \frac{2d}{\lfloor n/m \rfloor m}\leq \frac{4d}{n},\\
	\left(1-\left(1-\frac{2}{m}\right)^d \right) \leq \frac{2d}{m}   \quad  \text{ and } \quad 
	\left(1 - \left(1- \frac{2r}{m}\right)^d\right) \leq \frac{2dr}{m}
\end{align*}
since $m > 2r+1$ and thus $m\geq 2$ as well as $m \geq 2r$ was already required. Next we note that by the binomial theorem
\begin{align}\label{eq:binomialtheorm}
	\left(1+x\right)^d=\sum\limits_{j=0}^d \binom{d}{j}x^j
\end{align}
we have
\begin{align*}
	\left(1+x\right)^d -1 \leq \abs{x} \sum\limits_{j=1}^d \binom{d}{j}\abs{x}^{j-1}\leq \abs{x} \sum\limits_{j=1}^d \binom{d}{j} = \abs{x} (2^d -1)
\end{align*}
for $\abs{x}\leq 1$. From
\begin{align*}
	\frac{z}{1-z}\leq 2 z \leq 1 \Leftrightarrow z \leq \frac{1}{2}
\end{align*}
follows
\begin{align*}
	\left( \left(1+\frac{2 m/n}{1-2m/n}\right)^d - 1 \right) \leq \frac{4m}{n}\left(2^d-1\right)
\end{align*}
as long as $4m<n$.\\
For the fourth term we use both \eqref{eq:bernoulliineq} and \eqref{eq:binomialtheorm} for
\begin{align*}
	\left(1-\frac{2r}{m}\right)^d-\left(1-\frac{2r+2}{m}\right)^d\leq \left(2^d-1\right)\frac{2r}{m}+\frac{d (2r+2)}{m}=\frac{2r 2^d +2d (r+1)-2r}{m}.
\end{align*}
In conclusion we get
\begin{align*}
	\norm[\infty]{\frac{\evcf  (\L_n, \o)}{\abs{\L_n}} -\frac{1}{\abs{T_{m,n}}} \sum\limits_{t \in T_{m,n}} \frac{\evcf  (\L^r_m + t , \o) }{\abs{\L_m}} }  &\leq 32 d \frac{1}{n} + 104 \left(2^d-1\right) \frac{m}{n} \\
	&\quad + \left( 4d + 2r (2^d-1) + 36dr \right) \frac{1}{m} \nonumber
\end{align*}
by applying all the previous bounds to \eqref{eq:evcfconcentrationroughbound}.
\end{proof}
This result further yields the existence of a limit for the expected value of the normalized eigenvalue counting functions on growing cubes, namely
\begin{tthm}\label{thm:normalizedadmissiblefctconvergence}
	Let $\evcf$ be the eigenvalue counting function, $r,n \in \NN$ such that $2r+1<n$ and $\L_n := ([0,n) \cap \ZZ)^d$. Then $\frac{\EE \evcf  (\L_n^r, \cdot)}{\abs{\L_n}}$ (defined pointwise in $\RR$) forms a $\norm[\infty]{\cdot}$-Cauchy sequence and there exists a limit function $\admf^{*}\in \BB$ that is monotonically increasing. Furthermore
	\begin{align} \label{eq:normalizedadmissiblefctconvergence}
		\norm[\infty]{\frac{\EE \admf (\L_n^r, \cdot)}{\abs{\L_n}} - \admf^{*}} \leq \left( 4d + 2r (2^d-1) + 36dr \right) \frac{1}{n}.
	\end{align}
\end{tthm}

\begin{proof}
	First we note that because of \ref{prop:admissibletranslationinvariance} and the assumed property \ref{prop:M1_translationinvariance} of the probability space, the equation
	\begin{align*}
		\EE \admf (\L^r_m +t, \cdot)=\EE \admf (\L_m^r, \cdot) \circ \gamma_t =\EE \admf (\L_m^r, \cdot)
	\end{align*}
	is true for all $t \in \ZZ^d$ and $m,r \in \NN$ with $m>2r$.
	Therefore
	\begin{align}
		\EE \frac{1}{\abs{T_{m,n}}} \sum\limits_{t \in T_{m,n}} \frac{\admf (\L^r_m + t , \cdot) }{\abs{\L_m}}=  \frac{1}{\abs{T_{m,n}}} \sum\limits_{t \in T_{m,n}} \frac{\EE \admf (\L^r_m + t , \cdot) }{\abs{\L_m}} = \frac{\EE \admf (\L^r_m , \cdot) }{\abs{\L_m}}. \label{eq:expectedvaluenormalizedadmissiblefunction}
	\end{align}
	holds as well. We can thus reuse the bound of Lemma \ref{lem:geometricboundevcf} to bound
	\begin{align*}
		\norm[\infty]{\frac{\EE \admf (\L^r_n , \cdot) }{\abs{\L_n}} - \frac{\EE \admf (\L^r_k , \cdot) }{\abs{\L_k}}}
	\end{align*}
	as well. However, that bound only holds for $n>4m$ and we need to establish a Cauchy criterium for all $n,m$.\\
	For the next step, let $m<n \in \NN$ and $\delta>0$. Then by \ref{prop:admissiblealmostadditivity} and \eqref{eq:Folnercubesbound} there exists a $k>4n$ that is divisible by $n$ and $m$ (resulting in $\lfloor k/n\rfloor n=\lfloor k/m\rfloor m=k$) such that
	\begin{align*}
		\max \left\{ \frac{b(\L_k)}{\abs{\L_k}},\frac{\abs{\partial^n \left(\L^n_k \right)} }{\abs{\L_k^n}}, \frac{\abs{\partial^m \left(\L^m_k \right)} }{\abs{\L_k^m}} \right\} <\delta.
	\end{align*}
	We use the triangle inequality and \eqref{eq:expectedvaluenormalizedadmissiblefunction} to get
	\begin{align} \label{eq:expadmissiblefctCauchy1}
		&\norm[\infty]{\frac{\EE \admf (\L^r_n , \cdot) }{\abs{\L_n}} - \frac{\EE \admf (\L^r_m , \cdot) }{\abs{\L_m}}} \leq \norm[\infty]{\EE \frac{1}{\abs{T_{n,k}}} \sum\limits_{t \in T_{n,k}} \frac{\admf (\L^r_n + t , \cdot) }{\abs{\L_n}} - \frac{\EE \admf (\L_k , \cdot) }{\abs{\L_k}}}\nonumber \\
		&\quad \quad+ \norm[\infty]{\frac{\EE \admf (\L_k , \cdot) }{\abs{\L_k}} - \EE \frac{1}{\abs{T_{m,k}}} \sum\limits_{t \in T_{m,k}} \frac{\admf (\L^r_m + t , \cdot) }{\abs{\L_m}}}.
	\end{align}
	Both terms on the right side can be treated the same way by first considering a specific $x \in \RR$ and using \eqref{eq:geometricbounds} as well as the results of Lemma \ref{lem:geometricboundevcf} for
	\begin{align*}
		&\abs{\EE \frac{1}{\abs{T_{n,k}}} \sum\limits_{t \in T_{n,k}} \frac{\admf (\L^r_n + t , \cdot)(x) }{\abs{\L_n}} - \frac{\EE \admf (\L_k , \cdot) (x) }{\abs{\L_k}}} \\
		&\quad\leq \EE \abs{ \frac{1}{\abs{T_{n,k}}} \sum\limits_{t \in T_{n,k}} \frac{\admf (\L^r_n + t , \cdot) (x)}{\abs{\L_n}} -  \frac{\admf (\L_k , \cdot) (x) }{\abs{\L_k}}}\\
		&\quad  \leq \EE \left( \frac{b\left( \L_{k} \right)}{\abs{\L_{k}}}+\frac{26 \left(\abs{\L_k}-\abs{\L_k^n}\right) }{\abs{\L_k^n}}%\frac{10 \abs{\partial^n \left(\L^n_k \right)} }{\abs{\L_k^n}}
		+\frac{b(\L_n) + b(\L_n^r)+ 17\left(\abs{\L_n}-\abs{\L_n^r}\right)}{\abs{\L_n^r}} \right)\\
		%&\quad \quad \leq \EE \left( \frac{b(\L_k)}{\abs{\L_k}} + \frac{(3D+2E) \abs{\partial^n (\L_k^n)}}{\abs{\L_k^n}}+\frac{b(\L_n)+b(\L_n^r) +(2D+E)\abs{\partial^r (\L_n)}}{\abs{\L_n}} \right)\\
		&\quad  \leq 27 \delta +\frac{b(\L_n) + b(\L_n^r)+ 17\left(\abs{\L_n}-\abs{\L_n^r}\right)}{\abs{\L_n^r}}\\
		&\quad  \leq 27 \delta + \left( 4d + 2r (2^d-1) + 36dr \right) \frac{1}{n}.
	\end{align*}
	Since $x$ was arbitrary, this results in
	\begin{align}
		&\norm[\infty]{\EE \frac{1}{\abs{T_{n,k}}} \sum\limits_{t \in T_{n,k}} \frac{\admf (\L^r_n + t , \cdot) }{\abs{\L_n}} - \frac{\EE \admf (\L_k , \cdot) }{\abs{\L_k}}} 
%\nonumber \\		&\quad \quad
\leq 27 \delta +\left( 4d + 2r (2^d-1) + 36dr \right) \frac{1}{n} \label{eq:expadmissiblefctCauchy2}
	\end{align}
	where the second term on the right side converges to $0$ for increasing $n$. % as established in \eqref{eq:geometricboundconvergence}.
	Therefore, for every $\eps>0$ there is an $n_0\in \NN$ such that
	\begin{align}
		\left( 4d + 2r (2^d-1) + 36dr \right) \frac{1}{n} < \eps \ \forall n\geq n_0. \label{eq:expadmissiblefctCauchy3}
	\end{align}
	The bound \eqref{eq:expadmissiblefctCauchy2} is also valid if $n$ is replaced by  $m$. Thus, if $n,m \geq N$ then
	\begin{align*}
		\norm[\infty]{\frac{\EE \admf (\L^r_n , \cdot) }{\abs{\L_n}} - \frac{\EE \admf (\L^r_m , \cdot) }{\abs{\L_m}}} \leq 54 \delta + 2 \eps
	\end{align*}
	which follows from \eqref{eq:expadmissiblefctCauchy1}, \eqref{eq:expadmissiblefctCauchy2} and \eqref{eq:expadmissiblefctCauchy3}. This proves that $\frac{\EE \admf (\L^r_n , \cdot) }{\abs{\L_n}}$ is a Cauchy sequence and has a uniform limit $\admf^{*}$. Since $\frac{\admf (\L^r_n , \o) }{\abs{\L_n}}$ is right-continuous, monotone increasing and bounded by 1 for every $n$ and every $\o$ this is also true for its expected value by the bounded convergence theorem. %, see for example \cite[Corollary 6.26]{KlenkeENG}.
	Therefore, every $\EE \frac{\admf (\L^r_n , \cdot) }{\abs{\L_n}}$ lies in $\BB$ and is monotone increasing, and as a uniform limit this is true for $\admf^{*}$ as well.
	%	To see that $\admf^{*}$  is monotone increasing we use an argument from \cite{SSV17}: Let $\eps>0$ be arbitrary and choose $n \in \NN$ such that $\norm[\infty]{\EE \frac{\admf (\L^r_n , \cdot) }{\abs{\L_n}}-\admf^{*}}<\eps/2$. Then for all $x<x'\in \RR$ we have
	%	\begin{align*}
		%	\admf^{*}(x)\leq \EE \frac{\admf (\L^r_n , \cdot)(x) }{\abs{\L_n}}+\frac{\eps}{2}\leq \EE \frac{\admf (\L^r_n , \cdot)(x') }{\abs{\L_n}}+\frac{\eps}{2}\leq \admf^{*}(x)+\eps
		%	\end{align*}
	%	and thus $\admf^{*} (x) \leq \admf^{*}(x')$.\\
	By taking the limit $m \to \infty$ we can also obtain the bound \eqref{eq:normalizedadmissiblefctconvergence} for the difference of 
$\frac{\EE \admf (\L^r_n , \cdot) }{\abs{\L_n}}- \admf^{*}$ since $\delta$ was arbitrary.
\end{proof}

\section{Probabilistic approximation estimates} \label{sec:Probability}
Following the preceding section we can approximate the normalized eigenvalue counting function on a large cube by the averaged sum over the normalized eigenvalue counting functions on smaller cubes with explicit control on the error of the approximation. Next we would like to investigate the behaviour of this averaged sum. As an averaged sum of independent, identically distributed random variables we can hope to find a convergence to an expected value. In more detail, as a consequence of the two geometric approximation estimates \eqref{eq:geometricexplicitbound} and \eqref{eq:normalizedadmissiblefctconvergence} we have
\begin{align}\label{eq:completegeometricbound}
	\norm[\infty]{\frac{\evcf  (\L_n, \o)}{\abs{\L_n}} -\admf^{*}}&\leq \norm[\infty]{\frac{\evcf  (\L_n, \o)}{\abs{\L_n}} -\frac{1}{\abs{T_{m,n}}} \sum\limits_{t \in T_{m,n}} \frac{\evcf  (\L^r_m + t , \o) }{\abs{\L_m}} } \nonumber \\
	&\quad + \norm[\infty]{\frac{1}{\abs{T_{m,n}}} \sum\limits_{t \in T_{m,n}} \frac{\evcf  (\L^r_m + t , \o) }{\abs{\L_m}} - \frac{\EE \admf (\L_m^r, \cdot)}{\abs{\L_m}} } \nonumber \\
	&\quad + \norm[\infty]{\frac{\EE \admf (\L_m^r, \cdot)}{\abs{\L_m}} - \admf^{*}}\nonumber \\
	&\leq  32 d \frac{1}{n} + 104 \left(2^d-1\right) \frac{m}{n} +  2\left( 4d + 2r (2^d-1) + 36dr \right) \frac{1}{m} \nonumber \\
	&\quad + \norm[\infty]{\frac{1}{\abs{T_{m,n}}} \sum\limits_{t \in T_{m,n}} \frac{\evcf  (\L^r_m + t , \o) }{\abs{\L_m}} - \frac{\EE \admf (\L_m^r, \cdot)}{\abs{\L_m}} }.
\end{align}
for $n,m \in \NN$, $n>4m$ and $m>2r+1$.\\
The last term on the right side is precisely the point where we need to obtain a bound for the difference of a random function to a deterministic one. It is the norm of the difference of an averaged sample to an expected value, so at least for every fixed $x\in \RR$ the law of large numbers ensures
\begin{align*}
	\frac{1}{\abs{T_{m,n}}} \sum\limits_{t \in T_{m,n}} \frac{\evcf  (\L^r_m + t , \o) (x) }{\abs{\L_m}} \to  \frac{\EE \admf (\L_m^r, \cdot) (x)}{\abs{\L_m}} \quad \text{ for } n \to \infty
\end{align*}
for almost all $\o \in \O$. The challenge left is to sharpen the convergence to uniform bounds and quantify them.\\
To simplify notation we will use
\begin{align*}
	\sup_{x \in \RR}\abs{\frac{1}{\abs{T_{m,n}}} \sum\limits_{t \in T_{m,n}} \frac{\admf (\Lambda^r_m + t , \omega) (x) }{\abs{\Lambda_m}} - \EE \frac{\admf (\Lambda^r_m , \cdot) (x) }{\abs{\Lambda_m}}} = \sup\limits_{f \in \cF'}\abs{\frac{1}{\sumcount} \left(\sum\limits_{t=1}^\sumcount f(X_t)-\EE f(X_t)\right)}
\end{align*}
where $\cF'=\left\{\frac{\admf \left(\Lambda^r_m , \cdot\right)(x)}{\abs{\Lambda_m}} : x \in \RR\right\}$, $\sumcount=\abs{T_{m,n}}$ and $X_t = \left(\gamma_{t}\omega\right)_{\Lambda^r_m} $. Instead of a supremum over the real numbers we will instead treat the eigenvalue counting functions for each $x$ as a separate function from $\O$ to $\RR$ and then take the supremum over all of these functions.\\
The quantification we have in mind is of the form
\begin{align*}
	 \PP\left( \sup_{f\in \cF} \left| \frac{1}{\sumcount} \sum_{i=1}^\sumcount f(X_i) - \EE f(X_1) \right| \geq \kappa \right)
	\leq
	c_1 (\kappa , \sumcount) \cdot  \exp\left(c_2 (\kappa , \sumcount)\right)
\end{align*}
for all $\kappa >0$ with suitable $c_1 (\kappa , \sumcount)$ and $c_2 (\kappa , \sumcount)$.\\
Related to such concentration inequalities is the Orlicz norm.
\begin{ddef}[Orlicz norm] \label{def:orlicznorm}
	The \textbf{Orlicz norm} of a real random variable $X$ associated to a monotone increasing, non-constant convex function $\Phi\colon \RR \to \RR$ with $0 \leq \Phi (0)<1$ is
	\begin{equation*}
		\norm[\Phi]{X}:= \inf \left\{ C>0 : \EE \Phi \left( \frac{\abs{X}}{C}\right) \leq 1 \right\}
	\end{equation*}
	where $\inf (\emptyset):=\infty$.
\end{ddef}
The Orlicz norm for $\Phi(x)=x^p$ is the usual $L^p$-norm. If $\abs{X} \leq \abs{Y}$ almost surely, then $\norm[\Phi]{X} \leq \norm[\Phi]{Y}$. Of special interest are the Orlicz norms associated to the functions
\begin{align*}
		\psi_{p,M} (x)&:=\frac{1}{M} \mathrm{e}^{(x^p)}
\end{align*}
for $p\geq 1$ and especially $p=1$ and $p=2$. If $M=2$, the resulting Orlicz norm is identical to the one for
\begin{align*}
	\Psi_p (x)&:=\mathrm{e}^{(x^p)}-1\geq x^p.
\end{align*}		
In the rest of this proof we refer to these specific functions with $\psi_{p,M}$ and $\Psi_p$, while we use $\Phi$ for general monotone increasing, non-constant, convex functions with
$0 \leq \Phi (0)<1$.
\\
The connection to exponential concentration inequalities is made explicit by two bounds, 
which follow from Markov's inequality for convex functions and the layer cake formula, respectively. 
For a proof see e.g. \cite[Lemma 6.2]{Diss}.
\begin{llem}\label{lemma:orliczproperties} Let $\Phi$ be as in Definition \ref{def:orlicznorm}.
	\begin{enumerate}[label={(O\arabic*)}]
		\item If $\norm[\Phi]{X}\leq D < \infty$ then $\EE \Phi \left( \frac{\abs{X}}{\norm[\Phi]{X}} \right) \leq 1$ and $\PP (\abs{X}\geq y)\leq \frac{1}{\Phi \left(\frac{y}{D}\right)}$	\label{prop:orlicznormtoconcentration}
		\item If $X$ is a real random variable with $\PP(\abs{X}>x)\leq B\mathrm{e}^{-C x^p} \ \forall x>0$ with constants $B,C>0$ and $p\geq 1$ then
		\begin{align*}
			%	\norm[\Psi_p]{X}&\leq \left(\frac{1+B}{C}\right)^{1/p}\\
			\norm[\psi_{p,M}]{X}&\leq \left(\frac{B+M-1}{(M-1)C}\right)^{1/p}
		\end{align*}		\label{prop:orliczconcentrationtonorm}
%		\item \begin{align*}
%			\norm[L^p]{X} \leq \norm[\psi_{p,2}]{X},
%		\end{align*}
%		and thus
%		\begin{align*}
%			\EE \abs{X} = \norm[L^1]{X} \leq \norm[\psi_{1,2}]{X}.
%		\end{align*}\label{prop:orliczexpectationbound}
%		\item If $M\geq 2$ and $X=d$ almost surely, then
%		\begin{align*}
%			\norm[\psi_{p,M}]{X} &\leq \frac{\abs{c}}{(\log (M))^{1/p}}\leq \frac{\abs{c}}{(\log (2))^{1/p}}
%			%\norm[\Psi_p]{X} &\leq \frac{\abs{c}}{(\log (2))^{1/p}}
%		\end{align*}\label{prop:orliczconstantbound}
%		\item
%		%\begin{align*}
%		%	\norm[\Psi_1]{X} &\leq \frac{1}{\sqrt{\log (2)}} \norm[\Psi_2]{X}
%		%\end{align*}
%		Let $X$ and $Y$ be real random variables, then
%		\begin{align*}
%			\norm[\psi_{1,M}]{XY}\leq\norm[\psi_{2,M}]{X}\norm[\psi_{2,M}]{Y}.
%		\end{align*}
%		As a consequence we have
%		\begin{align*}
%			\norm[\psi_{1,M}]{X} &\leq \frac{1}{\sqrt{\log (M)}} \norm[\psi_{2,M}]{X}
%		\end{align*}\label{prop:orlicz12normbound}	
	\end{enumerate}
\end{llem}
Van der Vaart and Wellner (among others) established an estimate on the supremum of an (averaged) empirical process. However, they use a slightly different normalization, namely by $1/ \sqrt{\sumcount}$ instead of $1/\sumcount$. In the following we use their results with some arguments due to Pollard and adapt them to the present situation.
\begin{ddef}[{Empirical process (\cite[Chapter 2.1]{vanderWaartWellner1996})}]\label{def:empiricalprocess}
	Let $\sumcount \in \NN$ and let $X_1, X_2, ..., X_\sumcount$ be i.i.d.\ random variables taking values in a measurable space $Y$ with image measure $P$. Then the empirical process applied to a $P$-integrable function $f:Y \to \RR$  is
	\begin{equation}\label{eq:defempprocess}
		\GG_\sumcount (f):=\frac{1}{\sqrt{\sumcount}} \left( \sum_{i=1}^\sumcount \left(f(X_i) - \EE f(X_i)\right) \right).
	\end{equation}
	Let $\cF$ be a set of $P$-integrable functions $f:Y \to \RR$ then the map $\cF \ni f \to \GG_\sumcount (f)$ is called the $\cF$-indexed empirical process. 
\end{ddef}
The map  $f\mapsto \GG_\sumcount (f)$ is linear. If $\cF$ is finite, the following bound holds.
\begin{llem}\label{lemma:expfinitesup}
	Let $\GG_\sumcount$ be as in Definition \ref{def:empiricalprocess}. Let $\cF$ be a finite set of measurable bounded functions. Then for $M\geq 2$
	\begin{equation} \label{eq:expfinitesup}
		\norm[\psi_{1,M}]{\max\limits_{f \in \cF} \abs{\GG_\sumcount(f)}} \leq K_{\psi, M}' \left(\max\limits_{f \in \cF} \frac{\norm[\infty]{f}}{\sqrt{\sumcount}}(2+\log (\abs{\cF}))+\max\limits_{f \in \cF} \norm[L^2(P)]{f} \sqrt{2+\log (\abs{\cF})}  \right)
	\end{equation}
	where $K_{\psi, M}'=\frac{4(M+1)}{\log(3/2)(M-1)}$.
\end{llem}
The qualitative idea for the proof is due to Lemma 2.2.10 in \cite{vanderWaartWellner1996} by van der Vaart and Wellner. The proof of the quantitative version can be found in 
\cite[Lemma 6.4]{Diss}.
We give a short sketch here.
\begin{proof}[Sketch of the proof]
The proof mainly rests on two lemmas.
The first is an adaption of Lemma 3.2 from \cite{Pollard90} that gives a bound for the Orlicz norm of a supremum over an empirical process.
For $p=1,2$ and real random variables $X_1,...,X_m$, the inequality
	\begin{align}\label{eq:finitemaxauxpsi}
		\norm[\psi_{p,M}]{\max\limits_{1\leq i \leq m} X_i} \leq \frac{(2+\log (m))^{1/p}}{\log (3/2)} \max\limits_{1\leq i \leq m} \norm[\psi_{p,M}]{X_i}.
	\end{align}
holds.
The second is the Bernstein inequality, which states that for i.i.d. random variables $X_1, X_2,...,X_\sumcount$ with $\abs{X_i}\leq C$ for all $1\leq i \leq \sumcount$, the inequality
	\begin{equation*}
		\PP \left(\sum_{i =1}^\sumcount \left( X_i - \EE X_i \right) \geq x\right) \leq \mathrm{e}^{-\frac{1}{2}\frac{x^2}{\sigma^2 + c x}}
	\end{equation*}
	with $\sigma^2:=\sum_{i =1}^\sumcount \EE \left(X_i^2\right)$ and $c=\frac{C}{3}$ holds for all $x>0$. Applied to empirical processes the Bernstein inequality leads to
	\begin{align*}
		\PP\left(\abs{\GG_\sumcount (f)}\indic \left\{\abs{\GG_\sumcount (f)} \leq \frac{b}{a}\right\}>x\right) &\leq 2 \mathrm{e}^{-\frac{x^2}{4b}} \\
		\PP\left(\abs{\GG_\sumcount (f)}\indic \left\{\abs{\GG_\sumcount (f)} > \frac{b}{a}\right\}>x\right) &\leq 2 \mathrm{e}^{-\frac{x}{4a}}.
	\end{align*}
	where
	\begin{align*}
		a:=\max\limits_{f \in \cF} \frac{2}{3} \frac{ \norm[\infty]{f}}{\sqrt{\sumcount}}\leq \max\limits_{f \in \cF} \frac{ \norm[\infty]{f}}{\sqrt{\sumcount}} , \ b:= \max\limits_{f \in \cF} \EE (f(X_1))^2 =\max\limits_{f \in \cF} \norm[L^2(P)]{f}^2.
	\end{align*}
	\ref{prop:orliczconcentrationtonorm} implies now
	\begin{align}\label{eq:supremumoftruncatedempiricalprocess}
		\norm[\psi_{2,M}]{\abs{\GG_\sumcount (f)}\indic \left\{\abs{\GG_\sumcount (f)} \leq \frac{b}{a}\right\}} &\leq \sqrt{\frac{4(M+1)}{M-1} b} \\
		\norm[\psi_{1,M}]{\abs{\GG_\sumcount (f)}\indic \left\{\abs{\GG_\sumcount (f)} > \frac{b}{a}\right\}} &\leq \frac{4(M+1)}{M-1}  a . \nonumber
	\end{align}
	Since the right side of both inequalities only depends on $a$ and $b$, not $f$, it is now possible to bound suprema over Orlicz norms of this type. In combination with \eqref{eq:finitemaxauxpsi} this allows bounds on the Orlicz norm of the maximum of truncated empirical processes like those on the left sides of \eqref{eq:supremumoftruncatedempiricalprocess} indexed by a finite set. By using some other properties of Orlicz norms it is possible to split maxima of empirical processes into two truncated parts, leading to the bound stated in the lemma.
\end{proof}
Next we extend the bound to countable sets of functions. We do this by approximating the set with ever larger finite sets, but for this strategy to succeed we need some regularity of the considered functions. The idea here is to pick for any $q \in \NN$ a collection of functions $\left(g^q_i\right)_{1\leq i \leq N(q)}$ that is getting progressively denser and ensures that for every $f \in \cF$ there are indices $1\leq i(f,q) \leq N(q)$ such that 
\[
f=\sum\limits_{q=2}^\infty \left(g^q_{i(f,q)}-g^{q-1}_{i(f,q-1)}\right) 
\quad \text{ and } \quad 
\norm[L^2(P)]{g^q_{i(f,q)}-g^{q-1}_{i(f,q-1)}}<2^{-q}
\]
is always true, 
meaning that the functions $\left(g^q_i\right)_{1\leq i \leq N(q)}$ at \textquote{level} $q$ can be used to approximate any $f \in \cF$ up to an $L^2 (P)$-error of $2^{-q+1}$.
\\

As a consequence of Lemma \ref{lemma:expfinitesup} we need to control both the error in the $L^2 (P)$- and the sup-norm, which complicates the arguments, in addition to the challenge of finding the $g^q_i$. For the sake of illustration of the procedure let us assume for a moment that we have for finite $\cF$ a bound like
	\begin{align*}
	\norm[\psi_{1,M}]{\max\limits_{f \in \cF} \abs{\GG_\sumcount (f)}} \leq   \left(\max\limits_{f \in \cF} \norm[L^2]{f} \right) \sqrt{2+\log ( \abs{\cF})}
\end{align*}
instead the actual bound \eqref{eq:expfinitesup} of Lemma \ref{lemma:expfinitesup} and were able to select $g^q_i$ as above. Then we can try to treat countable sets $\cF$ in this hypothetical simplified situation by
\begin{align*}
	\sup_{f\in \cF} \abs{\GG_\sumcount (f)}=\sup_{f\in \cF} \abs{\GG_\sumcount \left(\sum\limits_{q=2}^\infty g^q_{i(f,q)}-g^{q-1}_{i(f,q-1)}\right)} 
	\leq \sum\limits_{q=2}^\infty \sup_{f\in \cF} \abs{\GG_\sumcount \left( g^q_{i(f,q)}-g^{q-1}_{i(f,q-1)}\right)}.	
\end{align*}
Since we only have finitely many choices for each $q \in \NN$, the supremum over $f\in \cF$ for each term of the sum is bounded by supremum over the functions $\left(g^q_i\right)_{1\leq i \leq N(q)}$. This is a supremum over a finite set, and since we assumed that $\norm[L^2(P)]{g^q_{i(f,q)}-g^{q-1}_{i(f,q-1)}}<2^{-q}$ is always true, our assumed toy model above would lead to
\begin{align*}
	\norm[\psi_{1,M}]{\max\limits_{f \in \cF} \abs{\GG_\sumcount (f)}} \leq \sum\limits_{q=2}^\infty 2^{-q} \sqrt{2+\log ( \abs{\cF_q})}
\end{align*}
where $|\cF_q|$ is the number of possible combinations $\left( g^q_{i(f,q)}-g^{q-1}_{i(f,q-1)}\right)$. If this sum converges we would get the bound that we want.\\
Unfortunately, Lemma \ref{lemma:expfinitesup} is more complicated than this hypothetical simplified model, but the general idea of replacing every function by a sum of small \textquote{links} with a choice from a finite set remains valid, albeit it is necessary to do some extra steps. The way we identify the functions $g^q_i$ described above and measure how many \textquote{links} we need for a given set of functions $\cF$ will be its bracketing number.
\begin{ddef}[Bracketing Cover]\label{definition:bracketingnumber}
	Let $Y$ be a measurable space and let $P$ be a measure on $Y$. For two functions $l, u \colon Y \to \RR$ with $l(y)\leq u (y)$ for all $y \in Y$ the \textbf{bracket} $[l,u]$ is the set
	\begin{equation*}
		[l,u] =  \{f \colon Y \to \RR \mid l(y) \leq f(y)\leq u(y) \text{ for all }y \in Y \}.
	\end{equation*}
	Here, $l$ is called the \textbf{lower boundary function} and $u$ the \textbf{upper boundary function} of the bracket.\\
	Let $\cF$ be a set of functions $f\colon Y \to \RR$. Then $\cI=\left\{ [l_i, u_i], 1\leq i \leq N\right\}$ is a \textbf{monotone bracketing cover} of $\cF$ if
	\begin{enumerate}[label={(BC\arabic*)}]
		\item \label{prop:bracketingmonotonicity} For all $y \in Y$ and $1\leq i \leq N$ we have
		\begin{align*}
			u_{i}(y) \geq l_{i} (y)
		\end{align*}
		and for all $y \in Y$ and $1\leq i \leq N-1$ we have
		\begin{align*}
			l_{i+1} (y) \geq u_{i}(y).
		\end{align*}		
		\item \label{prop:bracketingcover}$\cF \subset \cup_{i=1}^{N} [l_{i},u_{i}]$.
	\end{enumerate}
	A sequence $\left(\cI_q\right)_{q \in \NN}$ with $\cI_q=\left\{ [l_{q,i}, u_{q,i}], 1\leq i \leq N_{[]}(q, \cF, P)\right\}$ of monotone bracketing covers with sequences of boundary functions $\left(l_{q,i}\right)_{1\leq i \leq N_{[]}(q, \cF, P)}$ and $\left(u_{q,i}\right)_{1\leq i \leq N_{[]}(q, \cF, P)}$ in $L^2 (P)$ is a \textbf{nested monotone bracketing cover} (with regard to $P$) if
	\begin{enumerate}[label={(NBC\arabic*)}]
		\item \label{prop:bracketingcoversize}
		\begin{align*}
			\norm[L^2(P)]{u_{q,i}-l_{q,i}}<2^{-q}
		\end{align*}
		for all $q \in \NN$, $1\leq i \leq N_{[]}(q, \cF, P)$,
		\item \label{prop:bracketingnested}For all $q,t \in \NN$ and $1\leq i \leq N_{[]}(q, \cF, P)$ we have
		\begin{align*}
			l_{q,i} &\in \left(l_{q+t,j}\right)_{1\leq j \leq N_{[]}(q+t, \cF, P)}\\
			u_{q,i} &\in \left(u_{q+t,j}\right)_{1\leq j \leq N_{[]}(q+t, \cF, P)}.
		\end{align*}
	\end{enumerate}
	We call the function $q \to N_{[]}(q, \cF, P)$ the \textbf{monotone bracketing function} of the nested monotone bracketing cover.
\end{ddef}
Note that this definition is more restricted than bracketings used in many other cases, which usually only require \ref{prop:bracketingcover} and \ref{prop:bracketingcoversize}. These are the crucial properties for the next theorem as well, but the other properties naturally hold for the eigenvalue counting functions we are interested in and lead to better constants here.

A rigorous analogon of the discussion above of the hypothetical simplified situation is the following:

\begin{tthm}[Orlicz norm of countable set supremum of the empirical process]\label{theorem:expofcountablesup}
	Let $X_1, X_2, ...$ be i.i.d. random variables taking values in a measurable space $Y$, with image measure $P$, 
	$\cF$ a countable set of measurable functions $f:Y \to [0,1]$ with a nested monotone bracketing cover, and let $N_{[]}(q, \cF, P)$ be the associated monotone bracketing function. 
	%of the nested monotone bracketing cover. 
	\\
	Then for all $M\geq 2$
	\begin{align}\label{eq:orlicznormcountablesup}
		\norm[\psi_{1,M}]{ \sup_{f\in \cF} \left| \frac{1}{\sqrt{\sumcount}} \left( \sum_{i=1}^\sumcount \left(f(X_i) - \EE f(X_i)\right) \right) \right|} \leq K_{\psi, M}'' \sum\limits_{q=0}^\infty 2^{-q} \sqrt{2+ \log \left(N_{[]}(q, \cF, P)\right)}
	\end{align}
	holds, where $\norm[\psi_{1,M}]{\cdot}$ is the Orlicz norm associated to $\psi_{1,M} := \frac{1}{M}\mathrm{e}^x$, $K_{\psi, M}'=\frac{4(M+1)}{\log (3/2) (M-1)}$ and $K_{\psi, M}''=(10 \ K_{\psi, M}'+\frac{4}{\log (M)})$. 
\end{tthm}
The qualitative version of this result was established in Theorems 2.5.6 and 2.14.2 of \cite{vanderWaartWellner1996}, the quantitative version and its proof can be found in \cite[Theorem 6.9]{Diss}.
\subsection{Bracket coverings for eigenvalue counting functions}\label{subsec:bracketcoverings}
The next step now is to find a nested monotone bracketing cover for (a countable subset of) the set of functions
\begin{align*}
	\cF'=\left\{\frac{\admf (\Lambda^r_m,\cdot)(x)}{\abs{\Lambda_m}} : x \in \RR\right\}.
\end{align*}
To do this, we will use some properties of the normalized eigenvalue counting functions, namely the boundedness \ref{prop:admissibleboundedness}, the asymptotic behaviour
\begin{align*}
	\lim\limits_{x \to - \infty} \frac{\admf (\Lambda^r_m, \o)(x)}{\abs{\Lambda_m}}=0 
	\quad \text{ and }
	\lim\limits_{x \to \infty} \frac{\admf (\Lambda^r_m, \o) (x)}{\abs{\Lambda_m}}=\frac{\abs{\Lambda^r_m}}{\abs{\Lambda_m}} \quad \text{ for all } \o \in \O,
\end{align*}
the monotonicity \ref{prop:admissiblemonotone}, and the point-wise measurability \ref{prop:measurable}.\\% and finally the fact that eigenvalue counting functions of Anderson Operators on $\ZZ^d$ are continuous \cite{DelyonSouillard84}.\\
We further define
\begin{align*}
	\mrcF \colon \RR \to \RR, \qquad \mrcF(x) := \EE\left(\frac{\admf (\Lambda^r_m, \cdot) (x)}{\abs{\Lambda_m}}\right)
\end{align*}
which inherits the boundedness, limit behaviour and monotonicity by dominated convergence. Because of the limit behaviour it is convenient to define
\begin{align*}
	\mrcF (-\infty)=0, \quad 
	\mrcF^- (\infty)=\frac{\abs{\Lambda^r_m}}{\abs{\Lambda_m}}.
\end{align*}
We further define
\begin{align*}
	\mrcF^{-1}(\alpha) := \inf \{ \lambda\in\RR \mid \mrcF (\lambda) \geq  \alpha \},
\end{align*}
the \textbf{quantile function} or  \textbf{generalized inverse} of $\mrcF$, and based on this
\begin{align*}
	x_j (\eps) &:= \begin{cases}
		-\infty & \text{ if } j=0 \\
		\mrcF^{-1} ( j \cdot \eps \cdot \frac{\abs{\Lambda^r_m}}{\abs{\Lambda_m}})   &\text{ if } 0<j<k\\
		\infty & \text{ if } j=k
	\end{cases}
\end{align*}
for $\eps \in \QQ \cap (0,1)$ and $j=0,1,\dots ,k:=\lceil \frac1{\eps} \rceil$.\\
With these definitions we find the following nested monotone bracketing cover.
\begin{llem}\label{lem:ourFnmbc}
	The brackets
	\begin{align*}
		\cF_{2^{-q},j}&:=\left[\frac{\admf (\Lambda^r_m, \cdot)}{\abs{\Lambda_m}}\left(x_{j-1} \left(2^{-2q}\right)\right),\frac{\admf (\Lambda^r_m, \cdot)}{\abs{\Lambda_m}} \left(x_j \left(2^{-2q}\right)\right)\right],
\quad j \in \{1,\dots,2^{2q}\}	\end{align*}
	 form a nested monotone bracketing cover of
	\begin{align}\label{eq:countablesetformonrcontboundfct}
		\cF :=
		\left\{\frac{\admf (\Lambda^r_m, \cdot)}{\abs{\Lambda_m}}(x):\O_\L \to\RR \mid \exists Q \in \mathbb{Q}: x=\mrcF^{-1} (Q) \right\}
	\end{align}
	with regard to the measure $\PP$ and monotone bracketing function
	\begin{align*}
		N_{[]}(q, \cF, \PP)\leq 2^{2q}.
	\end{align*}
\end{llem}
For an illustration, see Figure \ref{fig:bracketingcover}. The left diagram shows how $x_j (\eps)$ is chosen based on $\mrcF$, while the right diagram is an illustration of the bracketing that results. 
On the right the horizontal axis corresponds to an abstracted space of possible $\o$ and the functions depicted are some `almost normalized´ eigenvalue counting functions $\o \mapsto\frac{N(\Lambda^r_m , \omega) (x)}{\abs{\Lambda_m}}$ for different $x$. Each colored dotted area denotes one bracket (with the color of its upper boundary function) and covers all possible function values for functions in this bracket.
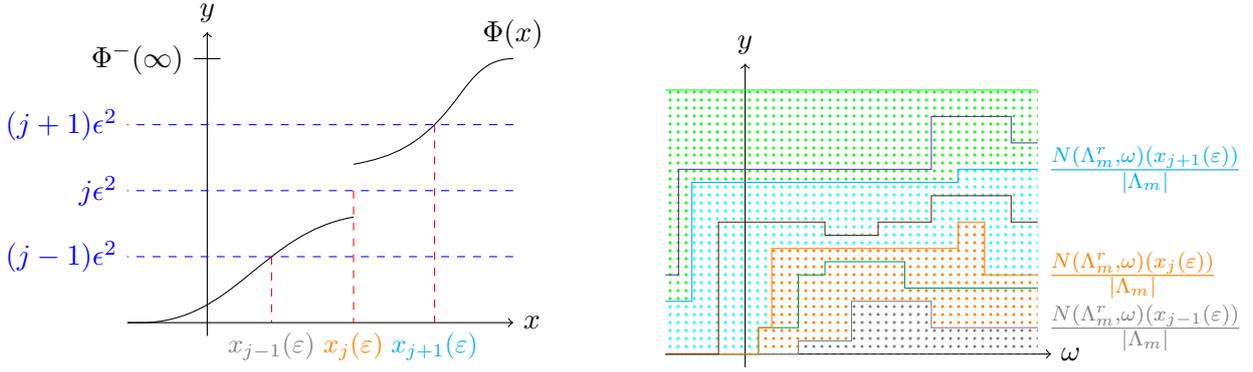
\begin{figure}
\begin{subfigure}{0.45\textwidth}
	\centering
		\begin{tikzpicture}
			\begin{scope}[scale=0.35]
				\draw[->, name path=xachse] (-3,0) -- (11.5,0) node[right] {$x$};
				\draw[->] (0,-0.5) -- (0,11) node[above] {$y$};
				\draw[blue,dashed,name path=linie1] (11.5,2.5)--(-3,2.5) node[left] {$(j-1) \epsilon^2$};
				\draw[blue,dashed,name path=linie2] (11.5,5)--(-3,5) node[left] {$j \epsilon^2$};
				\draw[blue,dashed,name path=linie3] (11.5,7.5)--(-3,7.5) node[left] {$(j+1) \epsilon^2$};
				\draw (0.5,10)--(-0.5,10) node[left]{$\mrcF^- (\infty)$};
				%\draw[rounded corners] (-3,0) .. controls(-1,0).. (2,3) .. controls(9,10) .. (10,10);
				\draw[name path=kurve1] (-3,0) to (-2.5,0) to [out=0,in=190] (5.5,4);
				\draw[name path=kurve2] (5.5,6) to [out=10,in=230] (9,8) to [out=50, in=180] (11.5,10) node[above] {$\mrcF (x)$};
				\draw [red, dashed,name intersections={of=linie1 and kurve1, by={schnittpunkt1}}] (schnittpunkt1)--($(-3,0)!(schnittpunkt1)!(11,0)$) node[below] {\color{gray}\small{$x_{j-1}(\eps)$}};
				\draw [red, dashed] (5.5,5)--($(-3,0)!(5.5,5)!(11,0)$) node[below] {\color{orange}\small{$x_{j}(\eps)$}};
				\draw [red, dashed,name intersections={of=linie3 and kurve2, by={schnittpunkt3}}] (schnittpunkt3)--($(-3,0)!(schnittpunkt3)!(11,0)$) node[below] {\color{cyan}\small{$x_{j+1}(\eps)$}};
				%\draw[name path=kurve] (-3,0) to (-2,0) to [out=0,in=220] (2,3) to [out=40,in=230] (9,8) to [out=60, in=180] (11.5,10) node[above] {$\EE \frac{N(\Lambda^r_m , \omega) (x)}{\abs{\Lambda_m}}$};
				%\draw [red, dashed,name intersections={of=linie1 and kurve, by={schnittpunkt1}}] (schnittpunkt1)--($(-3,0)!(schnittpunkt1)!(11,0)$) node[below] {$x_{j-1}(\eps)$};
				%\draw [red, dashed,name intersections={of=linie2 and kurve, by={schnittpunkt2}}] (schnittpunkt2)--($(-3,0)!(schnittpunkt2)!(11,0)$) node[below] {$x_{j}(\eps)$};
				%\draw [red, dashed,name intersections={of=linie3 and kurve, by={schnittpunkt3}}] (schnittpunkt3)--($(-3,0)!(schnittpunkt3)!(11,0)$) node[below] {$x_{j+1}(\eps)$};
				%\foreach \y in {1,2,3} \draw [red, dashed,name intersections={of=linie\y and kurve, by={schnittpunkt\y}}] (schnittpunkt\y)--($(-3,0)!(schnittpunkt\y)!(11,0)$);
			\end{scope}
		\end{tikzpicture}
\end{subfigure}
\hfill
\begin{subfigure}{0.45\textwidth}
	\centering
		\begin{tikzpicture}
			\begin{scope}[scale=0.35]
				\draw[->, name path=xachse] (-3,0) -- (11.5,0) node[right] {$\omega$};
				\draw[->] (0,-0.5) -- (0,11) node[above] {$y$};
				\fill[pattern=dots, pattern color=green] (-3,10)--(11,10)--(11,7)--(8,7)--(8,6.5)--(-2,6.5)--(-2,2)--(-3,2)--cycle;
				\fill[pattern=dots, pattern color=cyan] (-3,2) -- (-2,2) -- (-2,6.5)--(8,6.5)--(8,7)--(11,7)--(11,3)--(9,3)--(9,5)--(8,5)--(8,4)--(1,4)--(1,1)--(0.5,1)--(0.5,0)--(0,0)--(-3,0)--cycle;
				\fill[pattern=dots, pattern color=orange] (-3,0) -- (0,0) -- (0.5,0) -- (0.5,1) -- (1,1) -- (1,4) -- (8,4) -- (8,5) -- (9,5) -- (9,3) -- (11,3) -- (11,1) -- (7,1) --(7,2)--(4,2)--(4,0.5)--(2,0.5)--(2,0)--(0,0)--(-3,0)--cycle;
				\fill[pattern=dots, pattern color=gray] (-3,0) -- (0,0) -- (2,0)--(2,0.5)--(4,0.5)--(4,2)--(7,2)--(7,1)--(11,1)--(11,0)--cycle;
				\draw[gray] (-3,0) -- (0,0) -- (2,0)--(2,0.5)--(4,0.5)--(4,2)--(7,2)--(7,1)--(11,1)node[right] {\color{gray}$\frac{N(\Lambda^r_m , \omega) (x_{j-1}(\eps))}{\abs{\Lambda_m}}$};
				\draw[ForestGreen] (-3,0) -- (0,0) -- (0.5,0) -- (0.5,1) -- (2,1)--(2,3)--(3,3)--(3,3.5)--(6,3.5)--(6,2.5)--(11,2.5);
				\draw[orange] (-3,0) -- (0,0) -- (0.5,0) -- (0.5,1) -- (1,1)--(1,4)--(8,4)--(8,5)--(9,5)--(9,3)--(11,3)node[right] {\color{orange}$ \frac{N(\Lambda^r_m , \omega) (x_{j}(\eps))}{\abs{\Lambda_m}}$};
				\draw[Brown] (-3,0) -- (-1,0) -- (-1,5)--(3,5)--(3,4.5)--(5,4.5)--(5,5) -- (7,5)--(7,6)--(10,6)--(10,5)--(11,5);
				\draw[cyan] (-3,2) -- (-2,2) -- (-2,6.5)--(8,6.5)--(8,7)--(11,7)node[right] {\color{cyan}$\frac{N(\Lambda^r_m , \omega) (x_{j+1}(\eps))}{\abs{\Lambda_m}}$};
				\draw[Violet] (-3,3) -- (-2.5,3) -- (-2.5,7)--(7,7)--(7,9)--(10,9)--(10,8)--(11,8);
				\draw[green] (-3,10)--(11,10);
			\end{scope}
		\end{tikzpicture}
\end{subfigure}
\caption{Illustration of the choice of $x_j (\eps)$ (left) and a nested monotone bracketing cover consisting of evcf (reight).\label{fig:bracketingcover}}
\end{figure}
\begin{proof}
	We first prove \ref{prop:bracketingcover} and \ref{prop:bracketingcoversize} of Definition \ref{definition:bracketingnumber}, but to avoid technical complications we will only consider the case where $\mrcF$ is continuous. This already covers a large number of cases due to so called Wegner estimates, see for example \cite[Chapter 5.5]{Kirsch}. 
	The general proof without a continuity assumption is technically more involved but follows the same ideas. It can be found in \cite[Lemma 6.16]{Diss}.\\
	All functions $\frac{\admf (\Lambda^r_m, \cdot)}{\abs{\Lambda_m}} \left(x_j \left(2^{-2q}\right)\right)$ are contained in at least on bracket. For every $x\in \mrcF^{-1} (\mathbb{Q})\setminus\{x_0  \left(2^{-2q}\right),\dots,x_k  \left(2^{-2q}\right)\}$ there is a unique $j \in \{1,\dots,2^{2q}\}$ such that $x_{j-1} \left(2^{-2q}\right)< x <x_j \left(2^{-2q}\right)$. Thus
	\begin{align*}
		\frac{\admf (\Lambda^r_m, \o)}{\abs{\Lambda_m}} \left(x_{j-1} \left(2^{-2q}\right)\right) \leq \frac{\admf (\Lambda^r_m, \o)}{\abs{\Lambda_m}} (x) \leq \frac{\admf (\Lambda^r_m, \cdot)}{\abs{\Lambda_m}} \left(x_j \left(2^{-2q}\right)\right)\ \forall \o \in \O_\L
	\end{align*}
	because of the monotonicity of $\admf (\Lambda^r_m, \o) (x)$ in the variable $x$, leading to
	\begin{align*}
		 \frac{\admf (\Lambda^r_m, \cdot)}{\abs{\Lambda_m}} (x) \in  \cF_{\eps,j}
	\end{align*}
	hence
	\begin{align*}
		\cF \subset \bigcup_{j=1}^{2^{2q}} \cF_{2^{-2q},j}.
	\end{align*}
	Next observe
	\begin{align}\label{eq:F-inverse relations}
		\mrcF(\mrcF^{-1}(\alpha)) = \alpha
	\end{align}
	by the assumed continuity of $\mrcF$. %\alert{Nein! Evtl für Produktmaße mit stetiger Verteilung}
	The normalized eigenvalue counting functions only take values in $[0,1]$ and thus
	\begin{align*}
		&\EE\left(\left(\frac{\admf (\Lambda^r_m, \cdot)}{\abs{\Lambda_m}} \left(x_j \left(2^{-2q}\right)\right) - \frac{\admf (\Lambda^r_m, \cdot)}{\abs{\Lambda_m}} \left(x_{j-1} \left(2^{-2q}\right)\right))\right)^2\right)\\
		&\quad\quad\leq \EE\left(\frac{\admf (\Lambda^r_m, \cdot)}{\abs{\Lambda_m}} \left(x_j \left(2^{-2q}\right)\right) - \frac{\admf (\Lambda^r_m, \cdot)}{\abs{\Lambda_m}} \left(x_{j-1} \left(2^{-2q}\right)\right)\right) \\
		&\quad\quad= \mrcF \left(x_j \left(2^{-2q}\right)\right) - \mrcF \left(x_{j-1} \left(2^{-2q}\right)\right) \\
		&\quad\quad= j2^{-2q} \frac{\abs{\Lambda^r_m}}{\abs{\Lambda_m}} - (j-1)2^{-2q}\frac{\abs{\Lambda^r_m}}{\abs{\Lambda_m}} \leq 2^{-2q} %\qedhere
	\end{align*}
	holds, with the consequence that the $L^2 (\PP)$-size of every bracket is less or equal to $2^{-q}$. Condition \ref{prop:bracketingmonotonicity} is automatically fulfilled by the monotonicity of $\admf (\Lambda^r_m, \o)$ and $x_{j-1}\left(2^{-2q}\right) \leq x_j \left(2^{-2q}\right)$. By the definition of $x_j \left(2^{-2q}\right)$ we have $x_j ((2^{-q})^2)=x_{4j}((2^{-(q+1)})^2)$ for all $q \in \NN$ and $0\leq j \leq 2^{2q}$, which ensures condition \ref{prop:bracketingnested}. Every bracket for $\eps=2^{-q}$ contains exactly four brackets for $\eps=2^{-(q+1)}$.\\
	We need at most $k = \lceil \frac{1}{2^{-2q}} \rceil$ sets to cover $\cF$ in the described way, so we have
	\begin{align*}
		N_{[]}(q, \cF, \PP)\leq \lceil \frac{1}{2^{-2q}} \rceil=2^{2q}.
	\end{align*}
\end{proof}
\subsection{Concentration inequalities}
With the bound on the bracketing function established in Lemma \ref{lem:ourFnmbc}, Theorem \ref{theorem:expofcountablesup} applied to evcfs gives a concentration inequality.
\begin{cor}[Sub-root-exponential concentration inequality]\label{cor:firstconcineq}
	Let $\evcf$ be the eigenvalue counting function as defined in \eqref{eq:defevcf}, and let $\L^r_{m,1},...,\L^r_{m,\sumcount}$ be translates of $\Lambda^r_m =\big([0,m)^d\cap \ZZ^d\big)^r\neq \emptyset$ such that $\min \{ \d_{\text{set}}(\L^r_{m,i},\L^r_{m,j}) \mid i \neq j \}>r$.
	%In the setting of Theorem \ref{theorem:expofcountablesup} with the additional restriction $N_{[]}(q, \cF, P) \leq V 2^{W q}$ for some $V,W\geq 1$ we have
	Then for $M\geq 2, \ \kappa >0$
	\begin{align*}
		\PP\left( \sup_{x\in \mrcF^{-1} (\QQ)} \left| \frac{1}{\sumcount} \sum_{i=1}^\sumcount \frac{\admf (\Lambda^r_{m,i}, \o) (x)}{\abs{\Lambda_m}} - \EE \frac{\admf (\Lambda^r_m, \o)(x)}{\abs{\Lambda_m}} \right| \geq \kappa \right)
		\leq M \exp \left(- \frac{\sqrt{\sumcount} \kappa}{K_M}\right)
	\end{align*}	
	holds, where
	\begin{align*}
		K_M=\left(10\frac{4(M+1)}{\log (3/2) (M-1)}+\frac{4}{\log (M)}\right)\sum\limits_{q=0}^\infty 2^{-q} \sqrt{2+ + 2 q \log (2)}<\infty.
	\end{align*}
\end{cor}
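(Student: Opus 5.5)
The plan is to combine Theorem \ref{theorem:expofcountablesup}, Lemma \ref{lem:ourFnmbc} and the tail bound \ref{prop:orlicznormtoconcentration} of Lemma \ref{lemma:orliczproperties}.

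\textbf{Setting up i.i.d.\ samples.} Put $X_i:=\Pi_{\L^r_{m,i}}(\o)$, transported to $\O_{\L^r_m}$ via the translation that maps $\L^r_{m,i}$ onto $\L^r_m$. The sets $\L^r_{m,i}$ are nonempty and pairwise at distance $>r$, so \ref{prop:M3_independence} makes the $X_i$ independent. By \ref{prop:M1_translationinvariance} together with \ref{prop:admissibletranslationinvariance} they are identically distributed with law $P=\PP_{\L^r_m}$. Locality \ref{prop:admissiblelocality} allows us to write
\begin{align*}
	\frac{\admf(\Lambda^r_{m,i},\o)(x)}{\abs{\Lambda_m}}=f_x(X_i), \qquad f_x:=\frac{\admf(\Lambda^r_m,\cdot)(x)}{\abs{\Lambda_m}}.
\end{align*}
Each $f_x$ is measurable by \ref{prop:measurable} and takes values in $[0,1]$ by \ref{prop:admissibleboundedness}. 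The index set $\{f_x : x\in\mrcF^{-1}(\QQ)\}$ is exactly the countable family $\cF$ of \eqref{eq:countablesetformonrcontboundfct}. Finally,
\begin{align*}
	\frac{1}{\sumcount}\sum_{i=1}^{\sumcount} f(X_i)-\EE f(X_1)=\frac{\GG_\sumcount(f)}{\sqrt{\sumcount}}.
\end{align*}

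\textbf{Orlicz bound.} Lemma \ref{lem:ourFnmbc} supplies a nested monotone bracketing cover of $\cF$ with $N_{[]}(q,\cF,P)\le 2^{2q}$. For $q=0$ the single bracket $[0,1]$ suffices, so the bound holds there as well. Hence $\log N_{[]}(q,\cF,P)\le 2q\log 2$, and Theorem \ref{theorem:expofcountablesup} gives
\begin{align*}
	\norm[\psi_{1,M}]{\sup_{f\in\cF}\abs{\GG_\sumcount(f)}}\le K''_{\psi,M}\sum_{q=0}^\infty 2^{-q}\sqrt{2+2q\log 2}=K_M .
\end{align*}
The series converges because $2^{-q}\sqrt{q}$ is summable. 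Note that $K''_{\psi,M}=10K'_{\psi,M}+4/\log M$ is exactly the prefactor appearing in $K_M$.

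\textbf{Tail bound.} Apply \ref{prop:orlicznormtoconcentration} with $\Phi=\psi_{1,M}$, $D=K_M$ and $y=\sqrt{\sumcount}\,\kappa$. Since $1/\psi_{1,M}(y/D)=M\exp(-y/D)$, this yields
\begin{align*}
	\PP\Bigl(\sup_{f\in\cF}\abs{\GG_\sumcount(f)}\ge\sqrt{\sumcount}\,\kappa\Bigr)\le M\exp\Bigl(-\frac{\sqrt{\sumcount}\,\kappa}{K_M}\Bigr),
\end{align*}
which is the claim. The supremum is countable, hence measurable, so the event is well defined.

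\textbf{Main obstacle.} The delicate step is the reduction to i.i.d.\ samples. One must verify that the joint law of the shifted restrictions factorizes via \ref{prop:M3_independence}, and that identical distribution follows from \ref{prop:M1_translationinvariance}, so that Theorem \ref{theorem:expofcountablesup}, which is stated for i.i.d.\ variables, applies. The remaining work is bookkeeping of the constants and checking that the $q=0$ term of the bracketing function is admissible.
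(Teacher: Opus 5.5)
Your proposal is correct and follows essentially the same route as the paper. Both proofs realize the samples as $X_i=\Pi_{\Lambda^r_m}\circ\gamma_{z_i}$, which are i.i.d.\ by \ref{prop:M1_translationinvariance} and \ref{prop:M3_independence}, with translation invariance and locality identifying $\admf(\Lambda^r_{m,i},\omega)(x)/\abs{\Lambda_m}$ with $f_x(X_i)$; they then feed the bound $N_{[]}(q,\cF,P)\le 2^{2q}$ from Lemma \ref{lem:ourFnmbc} into Theorem \ref{theorem:expofcountablesup} and convert the resulting Orlicz bound $K_M$ into the tail estimate via \ref{prop:orlicznormtoconcentration} with $y=\sqrt{\sumcount}\,\kappa$.
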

\begin{proof}
	We first define new functions $\admf_{\L^r_m} \colon \O_{\L^r_m} \to \BB$ via
	\begin{align*}
		\admf_{\L^r_m}(\nu)(x):=\admf (\Lambda^r_m, i_{\L^r_m} (\nu)) (x)
	\end{align*}
	for $\nu \in \O_{\L^r_m}$ and $x \in \RR$, with the restricted probability space $\O_{\L^r_m}$ as defined in \ref{prop:M3_independence}. Here $\BB$ is the space of bounded right-continuous functions.\\
	For fixed $\nu$ we still have $\admf_{\L^r_m}(\nu)(x) \leq \admf_{\L^r_m}(\nu)(y)$ if $x \leq y$. Furthermore by \ref{prop:admissiblelocality} we have
	\begin{align}\label{eq:relrestrevcftoevcf}
		\admf_{\L^r_m}(\Pi_{\L^r_m} (\o))(x)=\admf (\Lambda^r_m, i_{\L^r_m} (\Pi_{\L^r_m} (\o))) (x)=\admf (\Lambda^r_m, \o) (x)
	\end{align}		
	 for all $\o \in \O$ and thus $\EE_{\PP_{\L^r_m}}\admf_{\L^r_m}(\cdot)(x)=\EE \admf (\Lambda^r_m, \cdot) (x)$. As a consequence the bracketing established in Lemma \ref{lem:ourFnmbc} carries over to
	\begin{align*}
		\tilde{\cF}:=
		\left\{\frac{\admf_{\L^r_m}(\cdot)}{\abs{\Lambda_m}}(x):\O_\L \to\RR \mid \exists Q \in \mathbb{Q}: x=\mrcF^{-1} (Q) \right\}
	\end{align*}
	with
	\begin{align*}
		N_{[]}(q, \tilde{\cF}, \PP_{\L^r_m})\leq \lceil \frac{1}{2^{-2q}} \rceil=2^{2q}.
	\end{align*}
	Now let $z_i \in \ZZ^d$ be such that $\L^r_{m,i}=\L^r_m+z_i$. Then by \ref{prop:M1_translationinvariance} we have
	\begin{align}\label{eq:relshiftedevcf}
		\admf (\Lambda^r_{m,i}, \o) (x)=\admf (\L^r_m,\gamma_{z_i} \o)(x)=\admf_{\L^r_m}(\Pi_{\L^r_m} (\gamma_{z_i} \o))(x).
	\end{align}
	Note that since
	\begin{align*}
		\Pi_{\L^r_m+z}(\o)=(\o_y)_{y \in \L^r_m+z}
	\end{align*}
	and
	\begin{align*}
		\Pi_{\L^r_m} (\gamma_z \o)=(\o_{y+z})_{y \in \L^r_m}
	\end{align*}
	both $\Pi_{\L^r_m+z}(\o)$ and $\Pi_{\L^r_m} (\gamma_z \o)$ contain the same values, just differently indexed. Therefore, $\left(\Pi_{\L^r_m}\circ \gamma_{z_i}\right)_{1\leq i\leq \sumcount}$ are independent and identically distributed by \ref{prop:M1_translationinvariance} and \ref{prop:M3_independence}.\\
	We now apply Theorem \ref{theorem:expofcountablesup} for the set $\tilde{\cF}$ with the i.i.d. random variables $X_i:=\Pi_{\L^r_m} \circ \gamma_{z_i}$, leading to
	\begin{multline*}
		\norm[\psi_{1,M}]{ \sup_{x \in \mrcF^{-1} (\QQ)} \left| \frac{1}{\sqrt{\sumcount}} \left( \sum_{i=1}^\sumcount \left(\frac{\admf_{\L^r_m}(\Pi_{\L^r_m} \circ \gamma_{z_i})(x)}{\abs{\Lambda_m}} - \frac{\EE_{\PP_{\L^r_m}}\admf_{\L^r_m}(\cdot)(x)}{\abs{\Lambda_m}}\right) \right) \right|}\\
		\quad \quad \leq K_{\psi, M}'' \sum\limits_{q=0}^\infty 2^{-q} \sqrt{2+ 2q \log (2)}
	\end{multline*}
	for all $M\geq 2$ and $K_{\psi, M}''$ as in Theorem \ref{theorem:expofcountablesup}. Substituting back via \eqref{eq:relrestrevcftoevcf} and \eqref{eq:relshiftedevcf} leads to
	\begin{multline*}
		\norm[\psi_{1,M}]{ \sup_{x \in \mrcF^{-1} (\QQ)} \left| \frac{1}{\sqrt{\sumcount}} \left( \sum_{i=1}^\sumcount \left(\frac{\admf (\Lambda^r_{m,i}, \o) (x)}{\abs{\Lambda_m}} - \EE \frac{\admf (\Lambda^r_m, \o)(x)}{\abs{\Lambda_m}}\right) \right) \right|} \\
		\quad \quad \leq K_{\psi, M}'' \sum\limits_{q=0}^\infty 2^{-q} \sqrt{2+ 2q \log (2)}=:K_M.
	\end{multline*}
	At last we use \ref{prop:orlicznormtoconcentration} to establish for all $\eta>0$
	\begin{align*}
		\PP \left(\sup_{x \in \mrcF^{-1} (\QQ)} \left| \frac{1}{\sqrt{\sumcount}} \left( \sum_{i=1}^\sumcount \left(\frac{\admf (\Lambda^r_{m,i}, \o) (x)}{\abs{\Lambda_m}} - \EE \frac{\admf (\Lambda^r_m, \o)(x)}{\abs{\Lambda_m}}\right) \right) \right| \geq \eta \right) \leq \frac{1}{\psi_{1,M} \left(\frac{\eta}{K_M}\right)}=M \mathrm{e}^{-\frac{\eta}{K_M}}.
	\end{align*}
	Using $\eta=\kappa \sqrt{\sumcount}$ then leads to the stated inequality.
\end{proof}
We can further improve the concentration inequality from an exponential of the square root of $\sumcount$ to a sub-exponential of $\sumcount$ by using the following concentration inequality due in this form to Massart.
\begin{tthm}[{\cite[Equation (5.45)]{Massart2007}}]\label{theorem:Massart}
	Let $T$ be a countable set and let $Z_1, Z_2,...,Z_\sumcount$ be independent random vectors taking values in $\RR^T$ with $\EE (Z_{i,t})=0$ and $\abs{Z_{i,t}}\leq 1$ for all $1\leq i \leq \sumcount$ and $t \in T$, where $Z_{i,t}$ is the $t$-coordinate of $Z_i$.
	Let $\sigma^2 := \sup_{t\in T} \sum_{i=1}^\sumcount\EE ((Z_{i,t})^2)$, $U := \sup_{t\in T} \sum_{i=1}^\sumcount (Z_{i,t})^2$ and\\ $Z:=\sup_{t\in T}  \left| \sum_{i=1}^\sumcount Z_{i,t} \right|$, then for any $\eta >0$
	\begin{align*}
		\PP  \left(  Z \geq \EE Z +  2\sqrt{(\sigma^2 + \EE U ) \eta} + 2 \eta \right)\leq e^{-\eta}.
	\end{align*}
\end{tthm}
The result is the following corollary.
\begin{cor}[Sub-exponential concentration inequality]\label{cor:secondconcineq}
	Let $\evcf$ be the eigenvalue counting function as defined in \eqref{eq:defevcf}, and let $\L^r_{m,1},...,\L^r_{m,\sumcount}$ be translates of $\Lambda^r_m \neq \emptyset$ such that $\min \{ \d_{\text{set}}(\L^r_{m,i},\L^r_{m,j}) \mid i \neq j \}>r$.
	%In the setting of Theorem \ref{theorem:expofcountablesup} with the additional restriction $N_{[]}(q, \cF, P) \leq V 2^{W q}$ for some $V,W\geq 1$ we have
	Then
	\begin{multline*}
		\PP\left( \sup_{x\in \mrcF^{-1} (\QQ)} \left| \frac{1}{\sumcount} \sum_{i=1}^\sumcount \frac{\admf (\Lambda^r_{m,i}, \o) (x)}{\abs{\Lambda_m}} - \EE \frac{\admf (\Lambda^r_m, \o)(x)}{\abs{\Lambda_m}} \right| \geq \kappa \right)\\
		\quad\leq  \exp \left( -\frac{1}{2}\left(\sqrt{\kappa +1} - 1\right)^2 \sumcount + \frac{K_2}{2}\sqrt{\sumcount} \right) \nonumber 
        % \\  		
        \quad\leq \exp \left( -\frac{1}{12}\kappa^2 \sumcount + \frac{K_2}{2}\sqrt{\sumcount} \right)
	\end{multline*}
	for all $0< \kappa \leq 1$ and $\sumcount \geq \left( \frac{K_2}{\kappa} \right)^2$, where
	\begin{align*}
		K_2=\left(\frac{120}{\log (3/2)}+\frac{4}{\log (2)}\right)\sum\limits_{q=0}^\infty 2^{-q} \sqrt{2+ + 2 q \log (2)}<\infty.
	\end{align*}
	Furthermore we have for $0< \kappa \leq 1$ 
	\begin{align*}
	\PP\left( \sup_{x\in \mrcF^{-1} (\QQ)} \left| \frac{1}{\sumcount} \sum_{i=1}^\sumcount \frac{\admf (\Lambda^r_{m,i}, \o) (x)}{\abs{\Lambda_m}} - \EE \frac{\admf (\Lambda^r_m, \o)(x)}{\abs{\Lambda_m}} \right| \geq \kappa \right)
		\leq  \exp \left( -\frac{\kappa^2}{24} \sumcount \right).
	\end{align*}
	for $\sumcount \geq \left(12\frac{K_2}{\kappa^2}\right)^2$.
\end{cor}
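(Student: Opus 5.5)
We apply Theorem \ref{theorem:Massart} to the centred and normalised summands. Take $T:=\mrcF^{-1}(\QQ)$, which is countable. For $t=x\in T$ set
\[
Z_{i,x}:=\frac{\admf(\Lambda^r_{m,i},\o)(x)}{\abs{\Lambda_m}}-\EE\frac{\admf(\Lambda^r_m,\cdot)(x)}{\abs{\Lambda_m}} .
\]
As in the proof of Corollary \ref{cor:firstconcineq}, the vectors $Z_i=(Z_{i,x})_{x\in T}$ are independent by \ref{prop:M3_independence}, via the identifications \eqref{eq:relrestrevcftoevcf} and \eqref{eq:relshiftedevcf}. They are centred by \ref{prop:M1_translationinvariance}. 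Since the normalised counting functions take values in $[0,1]$, we have $\abs{Z_{i,x}}\le 1$. Theorem \ref{theorem:Massart} then applies to $Z=\sup_x\abs{\sum_i Z_{i,x}}$, and the event in question is $\{Z\ge \kappa\sumcount\}$.

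\textbf{Bounding the ingredients.} First, $\sigma^2\le \sumcount$ and $U\le \sumcount$ trivially, so $\sigma^2+\EE U\le 2\sumcount$. Second, we need a bound on $\EE Z$. Following the proof of Corollary \ref{cor:firstconcineq} with $M=2$, the $\psi_{1,2}$-Orlicz norm of $Z/\sqrt{\sumcount}$ is at most $K_{\psi,2}''\sum_q 2^{-q}\sqrt{2+2q\log 2}$. Since $\psi_{1,2}$ gives the same norm as $\Psi_1(x)=e^x-1\ge x$, the $L^1$ norm is dominated by this Orlicz norm. With $K'_{\psi,2}=12/\log(3/2)$ we get $10K'_{\psi,2}+4/\log 2=120/\log(3/2)+4/\log 2$, hence $\EE Z\le K_2\sqrt{\sumcount}$. (Possibly a factor of $2$ enters here; the constant $K_2/2$ in the statement suggests the authors absorbed it differently, and I would adjust the bookkeeping to match.) Massart's inequality then gives, for every $\eta>0$,
\[
\PP\bigl(Z\ge K_2\sqrt{\sumcount}+2\sqrt{2\sumcount\eta}+2\eta\bigr)\le e^{-\eta}.
\]

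\textbf{Choice of $\eta$.} This is the main computational step. We choose $\eta$ so that the threshold equals $\kappa\sumcount$. Writing $\eta=\sumcount y^2/2$, the condition reads $y^2+2y=\kappa-K_2/\sqrt{\sumcount}$, so $y=\sqrt{1+\kappa-K_2/\sqrt{\sumcount}}-1$. This is nonnegative precisely when $\sumcount\ge (K_2/\kappa)^2$, which is the stated hypothesis. It then remains to show $\frac{\sumcount}{2}y^2\ge \frac{\sumcount}{2}(\sqrt{1+\kappa}-1)^2-\frac{K_2}{2}\sqrt{\sumcount}$. With $a=1+\kappa$ and $\delta=K_2/\sqrt{\sumcount}\le\kappa$, we have
\[
(\sqrt{a-\delta}-1)^2=a-\delta+1-2\sqrt{a-\delta}\ge (\sqrt a-1)^2-\delta ,
\]
because $\sqrt{a-\delta}\le\sqrt a$. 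This gives the first inequality. The second follows from $\sqrt{1+\kappa}-1\ge \kappa/(\sqrt2+1)$ for $\kappa\le1$, hence $(\sqrt{1+\kappa}-1)^2\ge\kappa^2/6$.

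\textbf{Final bound.} For $\sumcount\ge(12K_2/\kappa^2)^2$ we have $\frac{K_2}{2}\sqrt{\sumcount}\le\frac{\kappa^2}{24}\sumcount$. Therefore $-\frac{\kappa^2}{12}\sumcount+\frac{K_2}{2}\sqrt{\sumcount}\le-\frac{\kappa^2}{24}\sumcount$. This condition also implies $\sumcount\ge(K_2/\kappa)^2$ because $\kappa\le1$, so the previous step applies.

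The main obstacle is matching the constant in $\EE Z$ to $K_2/2$ versus $K_2$. If the crude comparison $\norm[L^1]{\cdot}\le\norm[\psi_{1,2}]{\cdot}$ only yields $K_2$, I would instead use the sharper estimate $\EE Z\le \log(M)\,\norm[\psi_{1,M}]{Z}$ obtained from Jensen's inequality.
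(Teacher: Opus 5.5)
Your proposal is correct and is essentially the paper's proof. Both apply Massart's inequality with $\sigma^2+\EE U\le 2\sumcount$ and bound $\EE Z\le K_2\sqrt{\sumcount}$ via $\EE\abs{X}\le\norm[\psi_{1,2}]{X}$ and the Orlicz estimate from Corollary \ref{cor:firstconcineq} with $M=2$; both then solve the quadratic for the threshold $\kappa\sumcount$ and drop $K_2/\sqrt{\sumcount}$ under the inner square root. Your worry about a factor of $2$ is unfounded: with $\eta=\sumcount y^2/2$, your own computation already turns $\EE Z\le K_2\sqrt{\sumcount}$ into exactly $\frac{K_2}{2}\sqrt{\sumcount}$ (the $\frac12$ comes from $\frac{\sumcount}{2}\delta$), so no Jensen refinement is needed.
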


\begin{proof}
	We apply Theorem \ref{theorem:Massart} with $X_i:=\Pi_{\L^r_m} \circ \gamma_{z_i}$, $T=\tilde{\cF}$,
	\begin{align*}
		Z_i=\left(f(X_i)-\EE (f(X_i))\right)_{f \in \tilde{\cF}}=\left(\frac{\admf_{\L^r_m}(\Pi_{\L^r_m} \circ \gamma_{z_i})(x)}{\abs{\Lambda_m}}-\frac{\EE_{\PP_{\L^r_m}}\admf_{\L^r_m}(\cdot)(x)}{\abs{\Lambda_m}}\right)_{x \in \mrcF^{-1} (\QQ)}
\end{align*}		
	and $Z_{i,f}=f(X_i) - \EE f(X_i)$, since then
	\begin{align*}
		Z&=\sup_{f\in \cF}  \left| \sum_{i=1}^\sumcount \left(f(X_i) - \EE f(X_i)\right) \right|
	\end{align*}		
	is up to normalization the supremum we are interested in. We immediately see that $\EE (Z_{i,f})=0$ and because of $0\leq f \leq 1$ we also have $\abs{Z_{i,f}}\leq 1$. Since $X_1,...,X_\sumcount$ are independent and all $f\in \tilde{\cF}$ are measurable, we also know that $Z_{1,f},...,Z_{\sumcount,f}$ are independent for each $f$. Therefore, the vectors $Z_1,...,Z_\sumcount$ are independent. The requirements of the theorem are thus satisfied and we need to find bounds on $\sigma^2$ and $\EE U$. We have
	\begin{align*}
		\sigma^2 &=\sup_{f\in \tilde{\cF}} \sum_{i=1}^\sumcount \Var (f(X_i))= \sup_{f\in \tilde{\cF}} \sum_{i=1}^\sumcount \EE (f(X_i)^2)-(\EE (f(X_i)))^2 \\
		&\leq \sup_{f\in \tilde{\cF}} \sum_{i=1}^\sumcount \EE (f(X_i)^2) \leq \sup_{f\in \tilde{\cF}} \sum_{i=1}^\sumcount 1 = \sumcount
	\end{align*}
	and
	\begin{align*}
		U = \sup_{f\in \tilde{\cF}} \sum_{i=1}^\sumcount \left(f(X_i) - \EE f(X_i)\right)^2 \leq \sup_{f\in \tilde{\cF}} \sum_{i=1}^\sumcount 1 = \sumcount,
	\end{align*}
	because every $f$ is pointwise between $0$ and $1$. This implies $ \EE U \leq \sumcount$. Now we just need a bound on $\EE Z$. As stated after Definition \ref{def:orlicznorm}, the Orlicz norm generated by $\psi_{p,2}$ is identical to the one generated by $\Psi_p=\mathrm{e}^{(x^p)}-1\geq x^p$. Thus $\EE X=\norm[\Phi(x)=x]{X}\leq \norm[\Psi_1]{X}=\norm[\psi_{1,2}]{X}$ and by the same argument as in Corollary \ref{cor:firstconcineq} we have
	\begin{align*}
		\EE Z= \sqrt{\sumcount} \EE \left(  \sup_{x\in \mrcF^{-1} (\QQ)} \left| \frac{1}{\sqrt{\sumcount}} \sum_{i=1}^\sumcount \frac{\admf (\Lambda^r_{m,i}, \o) (x)}{\abs{\Lambda_m}} - \EE \frac{\admf (\Lambda^r_m, \o)(x)}{\abs{\Lambda_m}} \right|\right) \leq \sqrt{\sumcount} K_2.
	\end{align*}
	Applying Theorem \ref{theorem:Massart} then leads to
	\begin{align*}
		\PP\left( \sup_{f\in \tilde{\cF}} \frac{1}{\sumcount} \left| \sum_{i=1}^\sumcount (f(X_i) - \EE f(X_i)) \right| \geq \overbrace{\frac{ K_2  \sqrt{\sumcount} + 2 \sqrt{2\sumcount} \sqrt{x} + 2 x}{\sumcount}}^{=:\kappa (x,\sumcount)} \right) \leq e^{-x}.
	\end{align*}
	Now we need to find $x(\kappa, \sumcount)$, the inverse function of $x \to \kappa (x,\sumcount)$. Let $y:=\sqrt{x}$, then the equation we have to solve is
	\begin{align*}
		&K_2  \sqrt{\sumcount} + 2 \sqrt{2} \sqrt{\sumcount} y+ 2 y^2 = \kappa \sumcount \\
		\Leftrightarrow&y^2 + \sqrt{2}\sqrt{\sumcount} y + \frac{K_2 }{2}\sqrt{\sumcount} = \frac{\kappa}{2}\sumcount \\
		\Leftrightarrow&\left(y + \frac{1}{\sqrt{2}}\sqrt{\sumcount}\right)^2 - \frac{1}{2}\sumcount + \frac{K_2 }{2}\sqrt{\sumcount}=\frac{\kappa}{2}\sumcount\\
		\Leftrightarrow&\left(y + \frac{1}{\sqrt{2}}\sqrt{\sumcount}\right)^2 = \underbrace{\frac{\kappa }{2}\sumcount +\frac{1}{2}\sumcount - \frac{K_2 }{2}\sqrt{\sumcount}}_{\text{has to be }\geq 0}
	\end{align*}
	The equation can be solved if $\sumcount \geq \left(\frac{K_2}{\kappa + 1}\right)^2$ and the solution is
	\begin{align*}
		y=\sqrt{\frac{\kappa + 1}{2}\sumcount - \frac{K_2 }{2}\sqrt{\sumcount}}-\frac{1}{\sqrt{2}}\sqrt{\sumcount}
	\end{align*}
	This has to be positive, which is fulfilled if $\sumcount \geq \left( \frac{K_2 }{\kappa} \right)^2$, which also implies the previous condition.\\
	Under this condition we have
	\begin{align*}
		x&=y^2=\frac{\kappa + 2}{2}\sumcount - \frac{K_2 }{2}\sqrt{\sumcount} - \sqrt{2\sumcount}\sqrt{\underbrace{\frac{\kappa + 1}{2}\sumcount - \frac{K_2 }{2}\sqrt{\sumcount}}_{\leq \frac{\kappa + 1}{2}\sumcount}}\\
		&\geq \left( \frac{\kappa + 2}{2} - \sqrt{\kappa + 1} \right) \sumcount - \frac{K_2 }{2}\sqrt{\sumcount}=\frac{1}{2}\left( \sqrt{\kappa +1 } - 1 \right)^2 \sumcount- \frac{K_2 }{2}\sqrt{\sumcount}
	\end{align*}
	As a result
	\begin{align*}
		\PP&\left( \sup_{f\in \tilde{\cF}} \frac{1}{\sumcount} \left| \sum_{i=1}^\sumcount (f(X_i) - \EE f(X_i)) \right| \geq \kappa \right)\\
		&\leq \exp \left(-\left(\frac{\kappa + 2}{2} \sumcount - \frac{K_2 }{2}\sqrt{\sumcount} - \sqrt{2 \sumcount}\sqrt{\frac{\kappa + 1}{2}\sumcount - \frac{K_2 }{2}\sqrt{\sumcount}}\right)\right)\\
		&\leq  \exp \left( -\frac{1}{2}\left(\sqrt{\kappa +1} - 1\right)^2 \sumcount + \frac{K_2 }{2}\sqrt{\sumcount} \right).
	\end{align*}
	Next we use that $\left(\sqrt{\kappa +1}-1\right)^2 \geq \kappa^2 /6$ for all $0 \leq \kappa \leq 1$, which can be verified by using a substitution $\vartheta =\sqrt{\kappa+1}$. This gives us
	\begin{align*}
		\PP\left( \sup_{f\in \tilde{\cF}} \frac{1}{\sumcount} \left| \sum_{i=1}^\sumcount (f(X_i) - \EE f(X_i)) \right| \geq \kappa \right)\leq \exp \left( -\frac{1}{12}\kappa^2 \sumcount + \frac{K_2 }{2}\sqrt{\sumcount} \right).
	\end{align*}
	We get non-trivial results if
	\begin{align*}
		-\frac{1}{12}\kappa^2 \sumcount + \frac{K_2 }{2}\sqrt{\sumcount} \leq 0
	\end{align*}
	which is equivalent to
	\begin{align*}
		\sumcount \geq \left(6\frac{K_2 }{\kappa^2}\right)^2.
	\end{align*}
	Furthermore for
	\begin{align*}
		\sumcount \geq \left(12\frac{K_2 }{\kappa^2}\right)^2,
	\end{align*}
	we get
	\begin{align*}
		-\frac{1}{12}\kappa^2 \sumcount + \frac{K_2 }{2}\sqrt{\sumcount} \leq -\frac{1}{24}\kappa^2 \sumcount
	\end{align*}
	and therefore
	\begin{align*}
		\PP\left( \sup_{f\in \tilde{\cF}} \frac{1}{\sumcount} \left| \sum_{i=1}^\sumcount (f(X_i) - \EE f(X_i)) \right| \geq \kappa \right) &\leq \exp \left( -\frac{1}{12}\kappa^2 \sumcount + \frac{K_2 }{2}\sqrt{\sumcount} \right)
            %\\ 		&
        \leq \exp \left( -\frac{1}{24}\kappa^2 \sumcount\right)
	\end{align*}
	which is the last statement of the Corollary.
\end{proof}
These concentration inequalities can now be combined with the geometric approximations of Section \refeq{sec:geometric}.

\section{Proofs of the main results}\label{sec:proofsofthemaintheorems}
Here we prove Theorem \ref{thm:sqrtexpconcentrationinequality},
Remark \ref{cor:sqrtexpconcentrationinequality1+2dim},
and Theorem \ref{thm:subexpconcentrationinequality} spelled out in Section \ref{s:MR}.

\begin{proof}[Proof of Theorem \ref{thm:sqrtexpconcentrationinequality}]
	Applying Corollary \ref{cor:firstconcineq} for $n>2m$ with $s=\abs{T_{m,n}}=\lfloor n/m \rfloor^d$ and $\left(\L_{m,i}^r\right)_{1\leq i\leq s}=\left(\L_m^r +t\right)_{t \in T_{m,n}}$ leads to
	\begin{align*}
		\PP \left( A_{M,n,m,\kappa} \right) \geq 1- M \exp \left(- \frac{\sqrt{\sumcount} \kappa}{K_M}\right)
	\end{align*}
	for $M\geq 2$ and $K_M$ as in Corollary \ref{cor:firstconcineq}, where
	\begin{align*}
		A_{M,n,m,\kappa}=\left\{\o \in \O : \sup_{x\in \mrcF^{-1} (\QQ)} \left| \frac{1}{\abs{T_{m,n}}} \sum_{t \in T_{m,n}} \frac{\admf (\Lambda^r_m + t, \o) (x)}{\abs{\Lambda_m}} - \EE \frac{\admf (\Lambda^r_m, \o)(x)}{\abs{\Lambda_m}} \right| < \kappa\right\}.
	\end{align*}
	Now we want to extend the statement from the countable set of $x$ to the whole energy axis.
	By a Glivenko-Cantelli-type argument we obtain
	\begin{align*}
		\o \in A_{M,n,m,\kappa} \Rightarrow \norm[\infty]{\frac{1}{\abs{T_{m,n}}} \sum_{t \in T_{m,n}} \frac{\admf (\Lambda^r_m + t, \o) (x)}{\abs{\Lambda_m}} - \EE \frac{\admf (\Lambda^r_m, \o)(x)}{\abs{\Lambda_m}}} \leq \kappa ,
	\end{align*}
	mainly using monotonicity \ref{prop:admissiblemonotone} and some properties of $\mrcF$. For details see \cite[Lemma 6.18]{Diss}.\\
	In combination with \eqref{eq:completegeometricbound} we have
	\begin{align}\label{eq:nmkappabound}
		\norm[\infty]{\frac{\evcf  (\L_n, \o)}{\abs{\L_n}} -\admf^{*}}&\leq 32 d \frac{1}{n} + 104 \left(2^d-1\right) \frac{m}{n} +  2\left( 4d + 2r (2^d-1) + 36dr \right) \frac{1}{m}  + \kappa.
	\end{align}
	 for $\o \in A_{M,n,m,\kappa}$ and $n>4m$ and $m>2r+1$.\\
	Now we want to choose $m$ and $\kappa$ as functions of $n$ with the fastest convergence of the uniform error possible. Since $n>m$ the first term on the right side does not dominate the error, the relevant terms for the convergence are only $\frac{m}{n}$, $\frac{1}{m}$ and $\kappa$. The optimal choice for the first two of these is $m=\sqrt{n}$, since any other choice will either increase the first or the second. As $m$ needs to be a natural number we will choose $m(n)=\lfloor\sqrt{n}\rfloor$. From this follows that we can choose $\kappa (n)=\frac{1}{\lfloor\sqrt{n}\rfloor}$ without slowing the convergence.\\
	But now we need to check whether $\sqrt{\lfloor n/m(n) \rfloor^d} \kappa (n)$ still grows in $n$, otherwise there is no concentration. We have
	\begin{align*}
		\sqrt{\lfloor n/m(n) \rfloor^d} \kappa (n)\geq \sqrt{\lfloor n/\sqrt{n} \rfloor^d} \frac{1}{\lfloor\sqrt{n}\rfloor}=\lfloor\sqrt{n}\rfloor^{d/2-1},
	\end{align*}
	so our choice works for $d\geq 3$, but not for $d=1,2$. 
[We exclude these special cases here as we would need to choose different functions for $m(n)$ and $\kappa (n)$. The results for these cases are listed in Remark \ref{cor:sqrtexpconcentrationinequality1+2dim} and a proof can be found in \cite[Corollary 7.3]{Diss}.]
 We required $m>2r+1$ before, so $n>m$ is true as long as  $n>(2r+1)^k$. We also need to ensure $n>4m$, but since
	\begin{align*}
		\frac{n}{m}=\frac{n}{\lfloor\sqrt[k]{n}\rfloor}\geq n^{1-\frac{1}{k}}\geq \sqrt{n}
	\end{align*}
	this is achieved by $n>16$.
	The result we arrive at with this choice is
	\begin{align*}
		\norm[\infty]{\frac{\evcf  (\L_n, \o)}{\abs{\L_n}} -\admf^{*}} &\leq 32 d \frac{1}{n} +104 \left(2^d-1\right) \frac{\lfloor\sqrt{n}\rfloor}{n} +  \left( 8d + 4r (2^d-1) + 72dr +1 \right) \frac{1}{\lfloor\sqrt{n}\rfloor}\\
		&\leq 32 d \frac{1}{n} +104 \left(2^d-1\right) \frac{1}{\sqrt{n}} +  \left( 8d + 4r (2^d-1) + 72dr +1 \right) \frac{1}{\sqrt{n}-1}
	\end{align*}
	as claimed in \eqref{eq:evcfbound} The identification of $\admf^{*}$ with $N$ was already established in \cite[Theorem 7.2]{SSV17}.
\end{proof}

\begin{proof}[Proof of Remark \ref{cor:sqrtexpconcentrationinequality1+2dim}]
	To check what we have to change for $d=1$ and $d=2$ we start out with a general case of $m(n)\sim n^j$ for some $j<1/2 \in \RR$ and $\kappa (n)=1/m(n)$. The convergence of the uniform bound will then be dominated by $n^{-j}$, since $\frac{m}{n}\sim n^{-(1-j)}$. We have to ensure that
	\begin{align*}
		\sqrt{\lfloor n/m(n) \rfloor^d} \kappa (n) \sim n^{\frac{d(1-j)}{2}-j}
	\end{align*}
	grows in $n$, i.e.{}
	\begin{align*}
		j<\frac{d}{2(1+d/2)}=\begin{cases}
			\frac{1}{3} & \text{ for } d=1\\
			\frac{1}{2} & \text{ for } d=2
		\end{cases}.
	\end{align*}
	Thus, we choose
	\begin{align*}
		d=1: \ &m(n)=\lfloor\sqrt[4]{n}\rfloor, \ \kappa (n)=\frac{1}{\lfloor\sqrt[4]{n}\rfloor} \\
		d=2: \ &m(n)=\lfloor\sqrt[3]{n}\rfloor, \ \kappa (n)=\frac{1}{\lfloor\sqrt[3]{n}\rfloor}.
	\end{align*}
	and arrive at
	\begin{align*}
		\norm[\infty]{\frac{\evcf  (\L_n, \o)}{\abs{\L_n}} -\admf} &\leq 32 d \frac{1}{n} +104 \left(2^d-1\right) \frac{\lfloor\sqrt[k]{n}\rfloor}{n} +  \left( 8d + 4r (2^d-1) + 72dr +1 \right) \frac{1}{\lfloor\sqrt[k]{n}\rfloor}\\
		&\leq 32 d \frac{1}{n} +104 \left(2^d-1\right) \frac{1}{n^{1-1/k}} +  \left( 8d + 4r (2^d-1) + 72dr +1 \right) \frac{1}{\sqrt[k]{n}-1}
	\end{align*}
	where
	\begin{align*}
		k=\begin{cases}
			4 &  \text{ for } d=1\\
			3 &  \text{ for } d=2
		\end{cases}.
	\end{align*}		
\end{proof}
\begin{proof}[Proof of Theorem \ref{thm:subexpconcentrationinequality}]
	The bound in \eqref{eq:evcfbound:k} follows just as in the proof of Theorem \ref{thm:sqrtexpconcentrationinequality}, but using the concentration inequality of Corollary \ref{cor:secondconcineq} instead of Corollary \ref{cor:firstconcineq}. Just as in that proof the best choice for $m(n)$ arising from the uniform bound would be $m(n)=\lfloor \sqrt{n}\rfloor$, but we also need to ensure that
	\begin{itemize}
		\item $\lfloor n/m(n) \rfloor^d /m(n)^2$ is growing in $n$ (which would be satisfied by the same $m(n)$ calculated for Theorem \ref{thm:sqrtexpconcentrationinequality}),
		\item  $\lfloor n/m \rfloor^d /m(n)^4 \geq \left(12 K_2\right)^2$ for the formulas from Corollary \ref{cor:secondconcineq} to be applicable.
	\end{itemize}
	By setting $m(n)\sim n^{1/k}$ for some yet to be determined $k$ we get
	\begin{align*}
		\frac{\lfloor n/m \rfloor^d}{m(n)^4} \sim n^{d(1-1/k)-4/k},
	\end{align*}
	and thus need to ensure that $d(1-1/k)-4/k>0$ which is equivalent to $k>\frac{4+d}{d}$. Our favoured case $k=2$ satisfies this condition only for $d \geq 5$. We have
	\begin{align*}
		\frac{\lfloor n/m \rfloor^d}{m(n)^4}\geq \frac{\left(n^{1-1/k}-1\right)^d}{n^{4/k}}
	\end{align*}
	and ensure the applicability if
	\begin{align*}
		n^{1-1/k}-1 \geq n^{4/(dk)} \left(12 K_2\right)^{2/d}
	\end{align*}
	or equivalently
	\begin{align*}
		n^{1-1/k-4/(dk)}-n^{-4/(dk)}\geq \left(12 K_2\right)^{2/d}.
	\end{align*}
	Since
	\begin{align*}
		n^{1-1/k-4/(dk)}-n^{-4/(dk)} \geq n^{1-1/k-4/(dk)}-1
	\end{align*}
	this is guaranteed if
	\begin{align*}
		n^{1-1/k-4/(dk)}-1\geq \left(12 K_2\right)^{2/d}
	\end{align*}
	or equivalently
	\begin{align*}
		n >  \left(\left(12 K_2\right)^{2/d}+1\right)^{\frac{d}{d-\frac{(d+4)}{k}}}
	\end{align*}
	as claimed.
\end{proof}
\section{Extensions to other settings}
\label{s:EoS}
The setting discussed in this paper only covered eigenvalue counting functions for the discrete Schrödinger operator on the lattice $\ZZ^d$ with restrictions to cubes. With a few changes to the geometric approximations used it is also possible to choose any monotiling F{\o}lner sequence instead of cubes. These changes can be found in \cite[Section 5.2]{Diss}, and the results corresponding to our main results can be found in \cite[Section 7.2]{Diss}. Instead of $\ZZ^d$ it is also possible to consider Cayley graphs of finitely generated amenable groups and eigenvalue counting functions on a nested F{\o}lner sequences, which is covered in \cite[Section 8.1]{Diss}. Here we need to resort to $\eps$-quasi tilings, leading to slightly less exact quantifications.\\
In the proofs presented here we often used the fact that the Schrödinger operator has a finite hopping range, but this is assumption can be weakened. For instance, we can also derive concentration inequalities for eigenvalue counting functions of the Laplace operator on long-range percolation graphs. This is done in \cite[Section 8.2]{Diss} for $\ZZ^d$ and cubes.

\subsection*{Acknowledgements}
Work on this paper was partially supported by the \emph{Deutsche Forschungsgemeinschaft}  through grant no. VE 253:9-1 
\emph{Random Schrödinger operators with non-linear influence of randomness} 
and 
through the GRK 2131 \emph{High-dimensional Phenomena in Probahility - Fluctuations and Discontinuity}

\bibliographystyle{alphaurl}

\end{document}